\newtheorem{theorem}{Theorem}
\newtheorem{lemma}[theorem]{Lemma}
\theoremstyle{remark}
\newtheorem{case}{Case}
\numberwithin{case}{theorem}
\numberwithin{subcase}{case}
\crefname{figure}{Figure}{Figures}
\DeclarePairedDelimiter\ceil{\lceil}{\rceil}
\DeclarePairedDelimiter\floor{\lfloor}{\rfloor}
\DeclareMathOperator{\crg}{cr}
\newcommand*\circled[1]{\tikz[baseline=(char.base)]{
		\node[shape=circle,draw,inner sep=.5pt](char){\fontsize{4pt}{0cm}\selectfont #1};}}
\newcommand{\crN}[1]{\crg_{\circled{$#1$}}}
\newcommand{\vtx}[4]{#1_{#2}(\textsc{#3,#4})}
\title{The tripartite-circle crossing number\\ of graphs with two small partition classes}
\author{
	Charles Camacho\thanks{{University of Washington, USA, \tt camachoc@uw.edu}},
	Silvia Fern{\'a}ndez-Merchant\thanks{{California State University, Northridge, USA, \tt silvia.fernandez@csun.edu}},
	Marija Jeli{\'c} Milutinovi{\'c}\thanks{{University of Belgrade, Serbia,  \tt marija.jelic@matf.bg.ac.rs}}\\
	Rachel Kirsch\thanks{{George Mason University,	USA, \tt rkirsch4@gmu.edu}},
	Linda Kleist\thanks{{TU Braunschweig, Germany, \tt l.kleist@tu-bs.de}},
	Elizabeth Bailey Matson\thanks{{Alfred University, USA,  \tt matson@alfred.edu}},
	Jennifer White\thanks{{Saint Vincent College, USA, \tt jennifer.white@stvincent.edu}}
}
\date{}
\begin{document}
	\maketitle
	\begin{abstract}
		A tripartite-circle drawing of a tripartite graph is a drawing in the plane, where each part of a vertex partition is placed on one of three disjoint circles, and the edges do not cross the circles. The tripartite-circle crossing number of a tripartite graph is the minimum number of edge crossings among all its tripartite-circle drawings. We determine the exact value of the tripartite-circle crossing number of $K_{a,b,n}$, where $a,b\leq 2$.
	\end{abstract}
	
	\graphicspath{{../figures/}}
	
	\section{Introduction}
	The \emph{crossing number} of a graph $G$, denoted by $\crg(G)$, is the minimum number of crossings over all drawings of $G$ in the plane. There are multiple variants to the problem of finding the crossing number of a graph. Some problems consider drawings on other surfaces or in other spaces, like drawings on the torus, a cylinder, or a $k$-page book. Others restrict the minimum to specific types of drawings, like rectilinear or geometric drawings, where the edges must be straight line segments. Drawings with few crossings have been studied in connection with readability and VLSI chip design \cite{leighton}. See \cite{Sch14} for a survey of crossing number variants and some of their applications.
	
	Exact crossing numbers are unknown even for very special graph classes. Famous examples of these are the long-standing Harary-Hill Conjecture 
	on the crossing number of the complete graph~$K_n$ \cite{HH,G1960} and the Zarankiewicz Conjecture on the complete bipartite graph~$K_{m,n}$~\cite{Z}. Among the most studied drawings of complete graphs are $2$-page book drawings (or \emph{$1$-circle drawings}) and cylindrical drawings (or \emph{$2$-circle drawings}). This prompted the study of \emph{$k$-circle drawings} of graphs \cite{FGHLM}, that is, drawings in the plane where the vertices are placed on $k$ specified circles and the edges cannot cross these circles. Of particular interest are \emph{$k$-partite-circle drawings}, where we further require that the vertices on each circle form an independent set. The minimum number of crossings among all $k$-partite-circle drawings of a graph $G$ is known as the \emph{$k$-partite-circle crossing number} of $G$ and is denoted by $\crN{k}(G)$. 
	
	This crossing number has been studied for complete bipartite and tripartite graphs. 
	In 1997, Richter and Thomassen~\cite{RT} settled the balanced case for complete bipartite graphs by showing that
	$ \crN{2}(K_{n,n})=n\binom{n}{3}$.
	\'Abrego, Fern\'andez-Merchant, and Sparks~\cite{AFS} generalized this result to all complete bipartite graphs. The exact expression for this crossing number was complicated as it was given in terms of summations involving floor and ceiling functions. It was recently simplified  to
	\begin{equation}\label{eq:ccr_Kmn}
		\crN{2} (K_{m,n})=\binom{n}{2}\binom{m}{2}-\nicefrac{1}{12}\cdot(n^2m^2-n^2-m^2+\gcd(m,n)^2)
	\end{equation}
	by \'Abrego and Fern\'andez-Merchant in~\cite{AF}. 
	In previous work \cite{tripartite_JGT}, we proved lower and upper bounds on $\crN{3}(K_{m,n,p})$; the implied lower and upper bounds for $K_{n,n,n}$ are of orders $\nicefrac{5}{4}\cdot n^4$ and $\nicefrac{6}{4}\cdot n^4$, respectively.
	
	In this paper, we study $\crN{3}(K_{m,n,p})$ for small values of $m$ and $n$. First, for $m=n=1$, we show that crossing-minimal tripartite-circle drawings are in one-to-one correspondence with crossing-minimal bipartite-circle drawings of $K_{2,n}$.
	
	\begin{restatable}{proposition}{KOneOneN}\label{obs:K11n}
		For every positive integer $n$,  \[\crN{3} (K_{1,1,n})=\crN{2} (K_{2,n})=\big\lceil\nicefrac{1}{4}\cdot n(n-2)\big\rceil.\]
	\end{restatable}
	
	The next smallest case, $K_{1,2,n}$, behaves differently. 
	We show that $\crN{3} (K_{1,2,n})\approx \nicefrac{3}{4}\cdot n^2$, which is smaller than $\crN{2} (K_{3,n})\approx \nicefrac{5}{6}\cdot n^2$.
	
	\begin{restatable}{theorem}{KOneTwoN}\label{thm:K12n}
		For every integer $n\geq 2$,
		$$\crN{3} (K_{1,2,n})=	\big\lfloor\nicefrac{3}{4}\cdot n^2\big\rfloor-n.$$
	\end{restatable}
	
	In previous work, we established that $\crN{3}(K_{2,2,2}) = 3$; for an illustration we refer to Figure 12(c) in \cite{tripartite_JGT}. As our main result, we establish $\crN{3}(K_{2,2,n})$ for all $n \ge 3$. 
	
	\begin{restatable}{theorem}{KTwoTwoN}\label{th:K22n}
		For every integer $n\geq 3$,
		\begin{align*}
			\crN{3} (K_{2,2,n})&=
			6\bigg\lfloor\frac{n}{2}\bigg\rfloor\left\lfloor\frac{n-1}{2}\right\rfloor+2n-3.
		\end{align*}
		
	\end{restatable}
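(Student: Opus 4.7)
The plan is to prove the theorem by establishing matching upper and lower bounds for $\crN{3}(K_{2,2,n})$.

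\textbf{Upper bound (construction).} I would exhibit an explicit tripartite-circle drawing achieving exactly $6\lfloor n/2\rfloor\lfloor(n-1)/2\rfloor+2n-3$ crossings. Place the $n$ vertices $c_1,\ldots,c_n$ consecutively along the circle $\mathcal{C}$ and position the two smaller circles $\mathcal{A}$ and $\mathcal{B}$ on opposite sides of $\mathcal{C}$, with $a_1,a_2$ placed antipodally on $\mathcal{A}$ and $b_1,b_2$ antipodally on $\mathcal{B}$, chosen so that the two $AC$-edges and two $BC$-edges at each $c_i$ leave $\mathcal{C}$ on opposite sides. Route the four $AB$-edges around the dense bundle of $C$-incident edges as efficiently as possible. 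Then partition the crossings by edge-type pair: the $(AC,AC)$, $(BC,BC)$, and $(AC,BC)$ contributions each take the form of a cyclic-ordering count over pairs $\{c_i,c_j\}\subseteq C$ and sum to $6\lfloor n/2\rfloor\lfloor(n-1)/2\rfloor$, while the four $AB$-edges crossing through the $C$-bundle contribute the additive $2n-3$.

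\textbf{Lower bound.} Let $D$ be an arbitrary tripartite-circle drawing of $K_{2,2,n}$. The restrictions of $D$ to $A\cup C$ and to $B\cup C$ are bipartite-circle drawings of $K_{2,n}$, so by the theorem of \'Abrego, Fern\'andez-Merchant, and Sparks~\cite{AFS} each contains at least $\lfloor n/2\rfloor\lfloor(n-1)/2\rfloor$ crossings, yielding $2\lfloor n/2\rfloor\lfloor(n-1)/2\rfloor$ same-type crossings. Next, I would lower-bound the number of $(AC,BC)$-crossings by $4\lfloor n/2\rfloor\lfloor(n-1)/2\rfloor$ via a pair-counting argument: for each pair $\{c_i,c_j\}\subseteq C$, the four $AC$-edges and four $BC$-edges incident to $\{c_i,c_j\}$ form a $K_{2,2,2}$-like substructure, and the geometry of tripartite-circle drawings forces a predictable minimum number of opposite-type crossings between the bundles at $c_i$ and $c_j$. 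This is essentially the kind of local-to-global charging developed in the authors' previous work~\cite{tripartite_JGT}. Finally, I would extract the additive $2n-3$ by analyzing how the four $AB$-edges must traverse the plane in the presence of the existing $AC$- and $BC$-bundles.

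\textbf{Main obstacle.} The tight additive term $2n-3$ is the delicate part. I would split into cases based on the cyclic order of $a_1,a_2$ on $\mathcal{A}$ and $b_1,b_2$ on $\mathcal{B}$, and on through which of the arcs around $\mathcal{C}$ each of the four $AB$-edges passes, and show that in every configuration the number of crossings involving an $AB$-edge is at least $2n-3$. Matching the constant $-3$ exactly will likely require identifying a small set of ``savings'' that can be achieved by the best placement of the $AB$-edges, plus inspection of the base cases $n=3,4$ to anchor the count. Avoiding over-counting across the three charging schemes (same-type, mixed-type, and $AB$-involving crossings) is the other technical hurdle.
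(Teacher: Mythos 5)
Your overall architecture matches the paper's: an explicit construction for the upper bound, and a lower bound obtained by splitting the crossings of an arbitrary drawing into same-type ($AC$ with $AC$, $BC$ with $BC$), mixed ($AC$ with $BC$), and $AB$-involving classes. Your first two lower-bound components are sound and are exactly what the paper proves (in its notation, $f_n(x_1,x_2)+f_n(x_3,x_4)\geq 2\lfloor n/2\rfloor\lfloor (n-1)/2\rfloor$ and $\sum f_n(y_i,y_j)\geq 4\lfloor n/2\rfloor\lfloor (n-1)/2\rfloor$). The upper-bound sketch is also in the right spirit, though note that the natural ``all straight segments'' drawings give $2n\pm 1$ crossings on the $AB$-edges, and reaching $2n-3$ requires rerouting a few specific edges; this is a checkable detail, not a conceptual issue.

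The genuine gap is your third lower-bound step: the claim that \emph{every} tripartite-circle drawing has at least $2n-3$ crossings involving an $AB$-edge is false, so the three minimizations cannot be carried out independently and added. In the drawing type where the four $AB$-edges form a quadrilateral with one self-crossing that does not separate any vertex from the outer circle, a green edge from inner vertex $i$ crosses a red edge only when its endpoint on the outer circle lies in the arc between $y_i$ and $x_i$; if the drawing is arranged so that $x_i=y_i$ for all four inner vertices, the $AB$-involving count drops to $1$, not $2n-3$. In that regime the deficit must be recovered from the fact that the six $f_n$-terms are then evaluated at points that cannot all be antipodal, so they exceed their individual minima by a compensating amount. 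This coupled trade-off is the heart of the paper's proof: one lemma bounds the combination $2d_n(y_i,x_i)+2d_n(x_{i+1},y_{i+1})+f_n(x_i,x_{i+1})$ from below by $\lfloor n/2\rfloor\lfloor(n-1)/2\rfloor+n-1-2Z_i$ where $Z_i=d_n(y_{i+1},y_i)$, and a second lemma shows the four mixed $f_n(y_i,y_j)$ terms exceed $4\lfloor n/2\rfloor\lfloor(n-1)/2\rfloor$ by roughly $Z_1^2+Z_3^2$, so the quadratic surplus absorbs the linear losses; a case analysis over the four topological types of the $K_{2,2}$ subdrawing and over the cyclic positions of the $x_i,y_i$ then closes the bound. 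Without some version of this coupling your plan cannot reach the additive $2n-3$; the ``over-counting'' issue you flag is not actually a problem (the three classes are disjoint), but the independence assumption is.
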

	
	In comparison, the Zarankiewicz Conjecture on $\crg(K_{m,n})$ has been proved only when $\min(m, n)\leq 6$ by Kleitman in 1970 \cite{K}, and for $m = 7$ or $m=8$ and $n\leq 10$ by Woodall in 1993 ~\cite{woodall1993}. The current best lower bounds are by de Klerk et al.~\cite{KMPRS} and Norin and Zwols~\cite{N}.
	The crossing number of the complete tripartite graph is also unknown in general. In 2017, Gethner et al.~\cite{GHLPRY} provided a drawing of $K_{m,n,p}$ with few crossings and conjectured that their drawing is crossing-optimal. The exact crossing numbers for small values of $m$ and $n$ are known only for $\crg(K_{1,n,p})$ with $n\leq 5$~\cite{Ho08,A1986,HZ,HM} and for $\crg(K_{2,n,p})$ with $n\leq 4$~\cite{KS,A1986,Ho13}. For $\crg(K_{3,3,p})$, only asymptotically tight bounds are known~\cite{GM}. 
	A general conjecture on the crossing number of $K_{1,m,n}$ has been proven only under the hypothesis that the Zarankiewicz Conjecture holds~\cite{YW}. See \cite{CHN} for a survey.
	
	A contrast between crossing numbers and circle crossing numbers can be seen in a comparison of the graphs $K_{2,2,n}$ and $K_{4,n}$, which differ by only four edges. While their crossing numbers are equal, as $\crg(K_{4,n})=\crg(K_{2,2,n})=2\lfloor \nicefrac{n}{2} \rfloor\lfloor\nicefrac{(n-1)}{2}\rfloor=\nicefrac{1}{2}\cdot n^2 +\Theta(n)$~\cite{K,KS}, their circle crossing numbers evaluate to $\crN{3}(K_{2,2,n})= \nicefrac{3}{2}\cdot n^2+\Theta(n)$ and $\crN{2}(K_{4,n})= \nicefrac{7}{4}\cdot n^2 +\Theta(n)$~\cite{AFS} and thus differ by a term quadratic in $n$.
	
	\subparagraph{Organization.}
	The remainder of our paper is organized as follows:  In \Cref{sec:2}, we introduce tools for counting the number of crossings based on previous work. 
	In \Cref{sec:K12n}, we prove \Cref{obs:K11n} regarding crossing-minimal tripartite-circle drawings of $K_{1,1,n}$ and prove \Cref{thm:K12n} determining the tripartite-circle crossing number of $K_{1,2,n}$, and then describe all crossing-minimal drawings. 
	In \Cref{sec:K22nProof}, we present our proof of \Cref{th:K22n},
	yielding the tripartite-circle crossing number of $K_{2,2,n}$. We conclude in \Cref{sec:open} with a discussion and open problems.
	
	\section{Tools for counting the number of  crossings}\label{sec:2}
	
	Our tools for counting the number of crossings build on our previous work in \cite{tripartite_JGT}. For a self-contained presentation, we give a concise summary.

	Drawings that minimize the number of crossings are known to be \emph{simple drawings}, that is, drawings where edges are simple curves and any two edges have at most one point in common, which is either a vertex or a crossing. In a tripartite-circle drawing of $K_{m,n,p}$, we label the three circles $\textsc{m}$, $\textsc{n}$, and ~$\textsc{p}$, and their numbers of vertices are $m$, $n$, and $p$, respectively. For an illustration consider \Cref{fig:MNP}.
	
	\begin{figure}[htb]
		\centering
		\includegraphics[page=1]{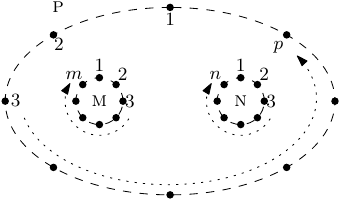}
		\caption{The vertices on the circles $\textsc{m}$ and $\textsc{n}$ are labeled clockwise; the vertices of the circle~$\textsc{p}$ are labeled counterclockwise.}
		\label{fig:MNP}
	\end{figure}
	
	By using projective transformations and the fact that in a tripartite-circle drawing of a complete tripartite graph the three underlying circles cannot be pairwise nested (they are not to be crossed by any edges), we may consider only the drawings where the \emph{outer} circle $\textsc{p}$ contains the \emph{inner} circles $\textsc{m}$ and $\textsc{n}$, see \Cref{fig:MNP}. In such a drawing, we label the vertices on circles $\textsc{m}$ and $\textsc{n}$ in clockwise order and the vertices on circle $\textsc{p}$ in counterclockwise order.  Likewise, we read arcs of circles in clockwise order for inner circles and in counterclockwise order for outer circles.
	
	\subsection{Defining the $x$- and $y$-labels}\label{xiyi}
	As progressively defined in \cite{RT}, \cite{AFS}, and \cite{tripartite_JGT}, we use the following labels. Let $\textsc{a}$, $\textsc{b}$, and $\textsc{c}$ be the three circles and $i$ be a vertex on $\textsc{a}$. The star formed by all edges from $i$ to $\textsc{b}$ together with circle $\textsc{b}$ partitions the plane into disjoint regions, shown in \Cref{fig:defyiLP}. Exactly one of these  regions contains circle $\textsc{a}$. This region is enclosed by two edges from $i$ to $\textsc{b}$ and an arc on $\textsc{b}$ between two consecutive vertices. We define $\vtx{x}{i}{a}{b}$ as the second of these vertices (in clockwise or counterclockwise order depending on whether $\textsc{b}$ is an inner or outer circle, respectively). Similarly, there is exactly one region defined by the star from $i$ that contains the third circle $\textsc{c}$, and the second vertex along circle $\textsc{b}$ (clockwise or counterclockwise as before) on the boundary of this region is $\vtx{y}{i}{a}{b}$. 
	If the two circles are clear from the context, we may also write $x_i$ or $y_i$.
	
	\begin{figure}[htb]
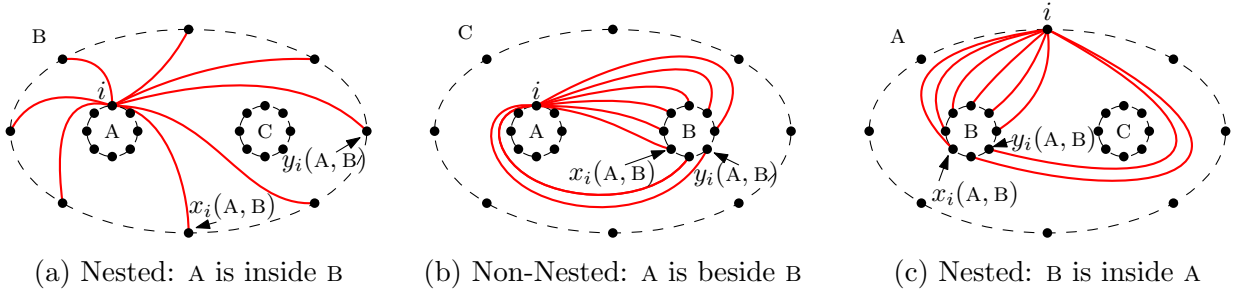

		\centering
		\begin{subfigure}[b]{.32\textwidth}
			\centering
			\includegraphics[page=2]{Kssn_all}
			\subcaption{Nested: $\textsc a$ is inside $\textsc b$}
		\end{subfigure}\hfill
		\begin{subfigure}[b]{.32\textwidth}
			\centering
			\includegraphics[page=3]{Kssn_all}
			\subcaption{Non-Nested: $\textsc a$ is beside $\textsc b$}
		\end{subfigure}
		\hfill
		\begin{subfigure}[b]{.32\textwidth}
			\centering
			\includegraphics[page=4]{Kssn_all}
			\subcaption{Nested: $\textsc b$ is inside  $\textsc a$}
			
		\end{subfigure}
		\caption{The definitions of $\vtx{x}{i}{a}{b}$ and $\vtx{y}{i}{a}{b}$ illustrated for three cases.}
		\label{fig:defyiLP}
	\end{figure}

	\subsection{Counting the crossings}
	
	We introduce notation necessary to state Theorem 7 in \cite{tripartite_JGT} that provides a precise formula for the number of crossings in a tripartite-circle drawing of $K_{m,n,p}$. For vertices $k$ and $\ell$ on a circle with $n$ vertices numbered $1,\ldots, n$ clockwise (respectively, counterclockwise), let
	\begin{equation*}
		d_n(k,\ell):=\ell-k\mod n
	\end{equation*}
	denote the \emph{distance} from $k$ to $\ell$ in clockwise (respectively, counterclockwise) order on the circle. For any $u,v \in \lbrace 1,2,\ldots,n\rbrace$, define
	\[f_n(u,v) := \binom{d_n(u,v)}{2} + \binom{n-d_n(u,v)}{2}.\]
	Throughout the paper, we use the facts that for all $u$ and $v$ we have $f_n(u,v)=f_n(v,u)$ and
	\begin{equation}\label{eq:k22n3}
		f_n(u,v)\geq \binom{\left\lfloor\frac{n}{2}\right\rfloor}{2}
		+\binom{\left\lceil\frac{n}{2}\right\rceil}{2}=\bigg\lfloor\frac{n}{2}\bigg\rfloor\left\lfloor\frac{n-1}{2}\right\rfloor.
	\end{equation}
	As in \cite{tripartite_JGT}, for vertices $i$ and $j$ on an inner (respectively, outer) circle $\textsc{a}$, we write $[i,j]$ for the arc of $\textsc{a}$ read clockwise (respectively, counterclockwise) from $i$ to $j$. We define $[i,j)$, $(i,j]$, and $(i,j)$ similarly, where a square bracket denotes inclusion of the endpoint, and a parenthesis denotes exclusion of the endpoint.
	
	A \emph{cyclic assignment} of \textsc{(a,b,c)} to \textsc{(m,n,p)} is one triple of the set $\mathfrak t:= \{\textsc{(m,n,p)}, \textsc{(n,p,m)}, \textsc{(p,m,n)}\}$. Let the numbers of vertices on the circles \textsc{a, b}, and \textsc{c} be denoted by $a,~b$, and $c$, respectively. The following theorem counts the total number of crossings. 
	
	\begin{theorem}[Theorem 7, \cite{tripartite_JGT}]\label{th:yitheorem}
		The number of crossings in a simple tripartite-circle drawing of~ $K_{m,n,p}$ is given by 
		\[
		\sum_{\textsc{(a,b,c)}\in \mathfrak t} \left(
		\sum_{\substack{i < j\\i,j \in \textsc{A}}} f_b\big(\vtx{x}{i}{a}{b},\vtx{x}{j}{a}{b}\big)
		+\sum_{\substack{i \in \textsc{A}\\j \in \textsc{B}}} f_c\big(\vtx{y}{i}{a}{c},\vtx{y}{j}{b}{c}\big)
		\right).
		\]
	\end{theorem}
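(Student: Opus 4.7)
The plan is to prove matching upper and lower bounds, using Theorem~\ref{th:yitheorem} as the crossing-counting device in both directions. Set $M := \lfloor n/2\rfloor \lfloor (n-1)/2 \rfloor$, so the target value is $6M + 2n - 3$. Applied to $K_{2,2,n}$, the formula in Theorem~\ref{th:yitheorem} splits the total into six summands: four y-pair $f_n$-terms from the $(\textsc{m,n,p})$ block, one x-pair $f_n$-term from the $(\textsc{n,p,m})$ block, and four $f_2$-sums (one single $\textsc{M}$ x-pair, a $2n$-sized $\textsc{N}\times\textsc{P}$ y-sum, a $\binom{n}{2}$-sized $\textsc{P}$ x-sum, and a $2n$-sized $\textsc{P}\times\textsc{M}$ y-sum).

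For the upper bound, I would exhibit an explicit tripartite-circle drawing realising $6M + 2n -3$ crossings. I would place the $n$ vertices of $\textsc{P}$ as evenly as possible on the outer circle, and the two vertices of $\textsc{M}$ and the two vertices of $\textsc{N}$ in a side-by-side configuration on small inner circles, tuned so that every $f_n$-term attains its minimum $M$ by Lemma~\ref{le:min}, the $\binom{n}{2}$-sized $f_2$-sum over $\textsc{P}$ x-pairs attains its minimum $\binom{\lfloor n/2\rfloor}{2} + \binom{\lceil n/2\rceil}{2} = M$, and the remaining three $f_2$-sums sum to exactly $2n-3$. The verification is a direct computation split by the parity of $n$.

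For the lower bound I would apply Theorem~\ref{th:yitheorem} and bound each of the six summands. Lemma~\ref{le:min} yields $\geq M$ for each of the five $f_n$-terms, and the elementary estimate $\binom{p_1}{2}+\binom{n-p_1}{2}\geq M$ yields $\geq M$ for the $\binom{n}{2}$-sized $f_2$-sum. This gives a baseline of $6M$, and what remains is to prove that the \emph{surplus} — the three remaining $f_2$-sums together with any slack over $M$ in the six already-bounded terms — totals at least $2n-3$ in every simple tripartite-circle drawing.

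The main obstacle is the tight coupling among labels at a common vertex: the x- and y-labels $\vtx{x}{i}{a}{b}$ and $\vtx{y}{i}{a}{c}$ come from one geometric drawing, so the six summands cannot be minimised independently. Concretely, making all four $(\textsc{m,n,p})$-y-pairs simultaneously attain $f_n = M$ is very rigid — for even $n$ it forces both $\textsc{M}$-vertices to share a y-label on $\textsc{P}$ and both $\textsc{N}$-vertices likewise, which in turn inflates the $2n$-sized y-sums over $\textsc{N}\times \textsc{P}$ and $\textsc{P}\times \textsc{M}$. I would therefore split the analysis by parity of $n$ (Lemma~\ref{le:min} gives slack $1$ for even $n$ and $2$ for odd $n$) and by which combinations of the five $f_n$-values are at the minimum, bounding the surplus from below by $2n-3$ in each case. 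I expect the heart of the proof is this case-by-case accounting, turning the geometric rigidity of near-minimum configurations into combinatorial penalties on the small $f_2$-sums, and consolidating those penalties into the single lower-order term $2n-3$.
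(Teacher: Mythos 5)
Your proposal does not address the statement you were asked to prove. The statement is Theorem~\ref{th:yitheorem} itself---the general formula expressing the number of crossings of an arbitrary simple tripartite-circle drawing of $K_{m,n,p}$ as a sum of $f_b$- and $f_c$-terms over the $x$- and $y$-labels. Your plan instead takes Theorem~\ref{th:yitheorem} as given (``using Theorem~\ref{th:yitheorem} as the crossing-counting device in both directions'') and sketches a proof of the paper's main result on $\crN{3}(K_{2,2,n})$. With respect to the actual statement this is circular: nothing in the proposal explains why the crossings of a drawing decompose over the three cyclic assignments, why the crossings between the two stars emanating from vertices $i<j$ on circle $\textsc{A}$ toward circle $\textsc{B}$ are counted exactly by $f_b\big(\vtx{x}{i}{a}{b},\vtx{x}{j}{a}{b}\big)$ (the Richter--Thomassen / \'Abrego--Fern\'andez-Merchant--Sparks cylindrical count), or why the crossings between an $\textsc{A}$--$\textsc{C}$ star and a $\textsc{B}$--$\textsc{C}$ star are governed by the $y$-labels on $\textsc{C}$ via $f_c\big(\vtx{y}{i}{a}{c},\vtx{y}{j}{b}{c}\big)$. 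A proof of the statement would have to classify every crossing by the unordered pair of circle-pairs its two edges join, reduce each class to a two-circle configuration, and verify the $f$-count in each of the nested and non-nested cases illustrated in Figure~\ref{fig:defyiLP}; none of that appears. (The present paper does not reprove this theorem either---it imports it from \cite{tripartite_JGT}---but that does not make your sketch a proof of it.)

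Even read charitably as a proposal for Theorem~\ref{th:K22n}, the sketch is far from the paper's argument and underestimates the difficulty of the lower bound. The paper does not work from a ``baseline $6\lfloor n/2\rfloor\lfloor (n-1)/2\rfloor$ plus surplus'' over the six $f$-summands; it first classifies drawings by the four simple subdrawings of $K_{2,2,0}$ (Lemma~\ref{le:k22n}), extracts the linear term $2n+O(1)$ from crossings with the four red inner-circle edges via the distances $d_n(y_i,x_i)$ and $d_n(x_{i+1},y_{i+1})$ in \eqref{eq:red}, and then trades these against the quadratic $f_n$-terms through the coupled estimates of Lemmas~\ref{lem:mixed} and~\ref{lem:ysNEW}. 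Your plan to recover $2n-3$ from the three remaining $f_2$-sums alone would fail as stated: each $f_2$-term equals $0$ or $1$ and can individually vanish, so without the coupling to the red edges and to the positions of the $x$- and $y$-labels there is no a priori reason those sums contribute anything.
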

	
	In this result (see Lemmas 5 and 6 in \cite{tripartite_JGT}), $\sum_{\substack{i < j\\i,j \in \textsc{A}}} f_b\big(\vtx{x}{i}{a}{b},\vtx{x}{j}{a}{b}\big)$ counts the number of crossings determined by edges incident to one vertex on circle $\textsc{A}$ and another on circle $\textsc{B}$; whereas  $\sum_{\substack{i \in \textsc{A}\\j \in \textsc{B}}} f_c\big(\vtx{y}{i}{a}{c},\vtx{y}{j}{b}{c}\big)$ counts the number of crossings determined by an edge between circles $\textsc{A}$ and  $\textsc{C}$ with an edge between circles $\textsc{B}$ and  $\textsc{C}$.

	\section{Proofs of Proposition \ref{obs:K11n} and \Cref{thm:K12n}}\label{sec:K12n}
	
	In this section, we determine the exact values of $\crN{3}(K_{1,1,n})$ and $\crN{3}(K_{1,2,n})$.

	\KOneOneN*
	
	\begin{proof}
		Let $D$ be any crossing-minimal drawing of $K_{1,1,n}$. By minimality, $D$ is simple; in particular, any two edges share at most one point. 
		Let $v$ and $w$ be the vertices in the $1$-vertex parts of $K_{1,1,n}$, and let $e = \{v,w\}$ denote their edge. Because any other edge $e'$ of $K_{1,1,n}$ is incident to  either $v$ or $w$, $e'$ does not cross $e$ in $D$; otherwise $e$ and $e'$ would share two points. Consequently, $e$ is not involved in any crossing and we can easily replace $e$ with two parallel curves along $e$ from $v$ to $w$ (representing a circle). 
		This yields a bipartite-circle drawing of $K_{2,n}$ with the same number of crossings. By the reverse substitution, any bipartite-circle drawing of~$K_{2,n}$ yields a tripartite-circle drawing of $K_{1,1,n}$ with the same number of crossings. Thus the two crossing numbers are equal. Finally, $\crN{2} (K_{2,n})=\binom{n}{2}-\nicefrac{1}{12}(4n^2-4-n^2+\gcd(2,n)^2)=\lceil \nicefrac{1}{4}\cdot n(n-2)\rceil$ by \Cref{eq:ccr_Kmn}.
	\end{proof}
	
	It turns out that $\crN{3} (K_{1,2,n})\approx \nicefrac{3}{4}\cdot n^2$, which is smaller than $\crN{2} (K_{3,n})\approx \nicefrac{5}{6}\cdot n^2$. 
	
	\KOneTwoN*
	
	The proof of \Cref{thm:K12n} is a direct consequence of the following three lemmas: \Cref{lem:small cases} takes care of the cases $2 \le n \le 4$, and \Cref{lem:upperbound,lem:lowerbound} settle the upper and lower bounds for $n\geq 5$, respectively.
	\begin{lemma}\label{lem:small cases}
		It holds that
		$\crN{3} (K_{1,2,2})=1,\crN{3} (K_{1,2,3})=3$, and $\crN{3} (K_{1,2,4})=8$.
	\end{lemma}
	\begin{proof}
		For $n=2,3,4$, \cref{fig:K12nSmall} presents tripartite-circle drawings of $K_{1,2,n}$ with $1$, $3$, and $8$ crossings, respectively, which prove the upper bounds.
		\begin{figure}[htb]
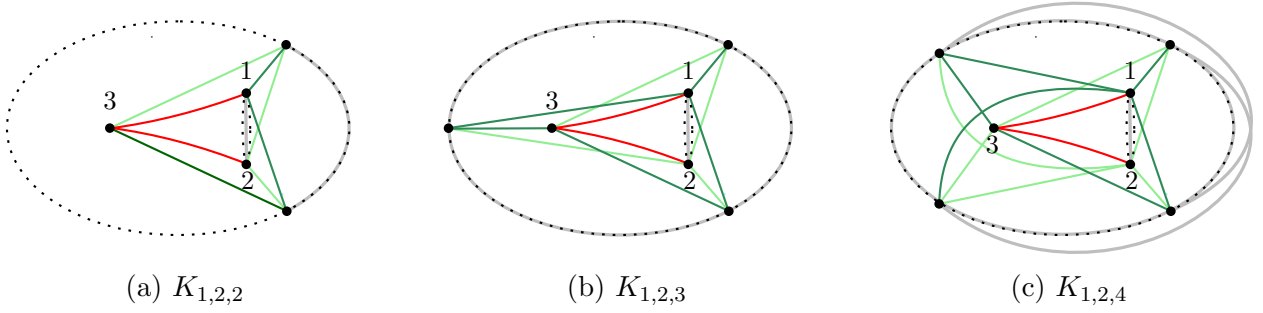

			\centering	
			\begin{subfigure}[c]{.3\textwidth}
				\centering
				\includegraphics[page=5]{Kssn_all}
				\subcaption{$K_{1,2,2}$}
				\label{fig:K12nSmallA}
			\end{subfigure}\hfill
			\begin{subfigure}[c]{.3\textwidth}
				\centering
				\includegraphics[page=6]{Kssn_all}
				\subcaption{$K_{1,2,3}$}
				\label{fig:K12nSmallB}
			\end{subfigure}\hfill
			\begin{subfigure}[c]{.3\textwidth}
				\centering
				\includegraphics[page=7]{Kssn_all}
				\subcaption{$K_{1,2,4}$}
				\label{fig:K12nSmallC}
			\end{subfigure}\hfill
			\caption{Crossing-minimal tripartite-circle drawings of $K_{1,2,n}$ for $n=2,3,4$; crossing-optimal drawings of the complete graph $K_{n+3}$ are obtained by adding gray edges.
			} 
			\label{fig:K12nSmall}
		\end{figure}
		
		For the lower bounds, we use the known crossing numbers $\crg(K_5)=1$, $\crg(K_6)=3$, and $\crg(K_7)=9$.  For any fixed drawing $D$, we denote its number of crossings by $\crg(D)$. For $n = 2, 3, 4$, let $D'$ be a crossing-minimal tripartite-circle drawing of $K_{1,2,n}$. We construct a drawing $D$ of $K_{n+3}$ as an extension of $D'$. For $n=2,3$, it suffices to add edges along the arcs of the $n$-vertex circle and the $2$-vertex circle as depicted by the gray edges in \cref{fig:K12nSmallA,fig:K12nSmallB}, introducing no new crossings. Thus $\crg(K_{n+3}) \le \crg(D)=\crg(D') = \crN{3}(K_{1,2,n})$, so $\crN{3} (K_{1,2,2})=1$ and $\crN{3} (K_{1,2,3})=3$. For $n=4$, we obtain a drawing $D$ of $K_7$ extending $D'$, shown in \cref{fig:K12nSmallC}, by adding one edge along a semicircle of the $2$-vertex circle, four edges along the arcs of the $4$-vertex circle, and two additional edges which cross each other but no other edges. Then $9 = \crg(K_7) \le \crg(D)=\crg(D')+1 = \crN{3}(K_{1,2,4})+1$, so $\crN{3}(K_{1,2,4})=8$.
	\end{proof}
	
	For the rest of this section, we label the point on the $1$-vertex circle \textsc{m} by $3$, and the points of the $2$-vertex circle \textsc{n} by $1$ and $2$, in such a way that the interior of triangle $\{1,2,3\}$ (read clockwise) does not contain the $n$-vertex circle~\textsc{p} (see  Figures \ref{fig:K12n_segments} and \ref{fig:K12n_segmentsCount}).
	
	\begin{lemma}\label{lem:upperbound}
		For every integer $n\geq 5$, $\crN{3} (K_{1,2,n})\leq\lfloor\nicefrac{3}{4}\cdot n^2\rfloor-n$.
	\end{lemma}
	\begin{proof}
		The family of straight-line drawings in \Cref{fig:K12n_segments} yields the upper bound having exactly
		$\lfloor\nicefrac{3}{4}\cdot n^2\rfloor-n$ crossings. 
		In the drawing, we group all vertices on circle \textsc{p} except $w$ 
		into four sets $A,B,C$, and $D$ whose sizes depend on the parameter $t$; see \Cref{fig:K12n_segments}. (Although a single drawing would be enough to settle the bound, this family of drawings will be part of the full classification of crossing-optimal drawings in Section 3.1.) 
		\begin{figure}[htb]
			\centering	\includegraphics[page=8]{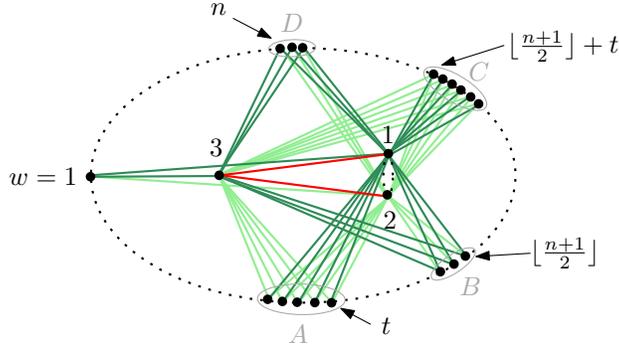}
			\caption{A crossing-minimal tripartite-circle drawing of $K_{1,2,n}$ for all $n\geq 4$ and $2\leq t \leq \lceil(n-1)/2\rceil$.} 
			\label{fig:K12n_segments}
		\end{figure}
		
		We color the edges 13 and 23 red; all other edges are green. There are $|A\cup D|$ crossings with red edges. The green-green crossings are determined by two vertices in $\{1,2,3\}$ and two vertices on~\textsc{p}: more precisely, by 1, 2, and any two vertices in $A\cup D\cup\{w\}$ or in $B\cup C$; by 1, 3, and any two vertices in $C\cup D\cup\{w\}$ or in $A\cup B$; or by 2, 3 and any two vertices in $A\cup B\cup\{w\}$ or in $C\cup D$. Since $|A\cup D\cup\{w\}|=|C\cup D|=\lceil(n+1)/2\rceil-1$ and $|A\cup B\cup\{w\}|=|B\cup C|=\lfloor(n+1)/2\rfloor$, the total number of crossings is 
		\[
		\left\lceil \frac{n+1}{2}\right\rceil-2
		+2\binom{\lceil \frac{n+1}{2}\rceil-1}{2}
		+2\binom{\lfloor \frac{n+1}{2}\rfloor}{2}
		+\binom{\lfloor \frac{n+1}{2} \rfloor-1}{2}
		+\binom{\lceil \frac{n+1}{2}\rceil}{2}\\
		=\big\lfloor\nicefrac{3}{4}\cdot n^2\big\rfloor-n.\qedhere\]
	\end{proof}
	\begin{lemma}\label{lem:lowerbound}
		For every integer $n\geq 5$, $\crN{3} (K_{1,2,n})\ge \lfloor\nicefrac{3}{4}\cdot n^2\rfloor-n$.
	\end{lemma}
	\begin{proof}
		Start with an arbitrary simple tripartite-circle drawing of $K_{1,2,n}$; for an example consider \cref{fig:K12n_segmentsCount}. 
		For $i=1,2$, let $U_i$ be the set of vertices $u$ on circle~\textsc{p} such that $iu$ crosses $j3$, where $\{i,j\}=\{1,2\}$.
		Note that $U_1=[y_1,x_1)$ and $U_2=[x_2,y_2)$ and so the number of crossings with the edges $13$ and $23$ is $d_n(y_1,x_1)+d_n(x_2,y_2)$.
		By \Cref{th:yitheorem}, there are $f_n(x_1,x_2)$ crossings of edges $1u$ and $2v$, $f_n(y_1,y_3)$ crossings of edges $1u$ and $3v$, and $f_n(y_2,y_3)$ crossings of edges $2u$ and $3v$, where $u$ and $v$ are vertices on circle~\textsc{p}. 
		
		Therefore, the total number of crossings is
		\begin{equation}\label{eq:k12n1}
			d_n(y_1,x_1)+d_n(x_2,y_2)+f_n(x_1,x_2)+f_n(y_1,y_3)+f_n(y_2,y_3).
		\end{equation}
		In order to simplify the presentation, we write $d_x:=d_n(x_1,x_2)$ and $d_y:=d_n(y_2,y_1)$.

		We show that for fixed $y_1$ and $y_2$, the value of $f_n(y_1,y_3)+f_n(y_2,y_3)$ is minimized when $y_3$ divides the longer of the intervals $[y_2,y_1)$ and $[y_1,y_2)$ in half. For $\{i,j\} = \{1,2\}$ and $y_3\in(y_i,y_j]$, we write $D:= d_n(y_3,y_j)$ and $c_i:=d_n(y_j,y_i)$. Using the identity $\binom{x}{2}+\binom{n-x}{2} = \binom{n}{2} - x(n-x)$, we obtain
		\begin{align*}
			f_n(y_1, y_3) + f_n(y_2,y_3) &= 2\binom{n}{2} - (n-D)D - (n-c_i-D)(c_i+D)\\
			&= 2D^2 + 2(c_i-n)D + c_i(c_i-n) + 2\binom{n}{2}.
		\end{align*}
		This is a quadratic function in $D$ and thus minimized when $D \in \{\floor{\frac{n-c_i}{2}}, \ceil{\frac{n-c_i}{2}}\}$, so
		\begin{equation}\label{eq:ineq_2fs}
			f_n(y_1,y_3)+f_n(y_2,y_3)\geq 2\binom{n}{2}-\left\lfloor\dfrac{n^2-\min(d_y,n-d_y)^2}{2}\right\rfloor.
		\end{equation}

		Since the drawing is simple, and the edges $1u$ and $2v$ cross for any $u\in U_1$ and $v\in U_2$ (see \Cref{fig:K12n_segmentsCount}), the sets $U_1$ and $U_2$ must be disjoint.  If $U_i=\emptyset$, then $x_i=y_i$. If $U_1\neq\emptyset$, then its first and last points counterclockwise along the circle~\textsc{p} are $y_1$ and $x_1-1$, respectively. Similarly, if $U_2\neq\emptyset$, then its first and last points  counterclockwise along the circle~\textsc{p} are $x_2$ and $y_2-1$, respectively. We consider three cases.
		
		\begin{figure}[htb]
			\centering	\includegraphics[page=9]{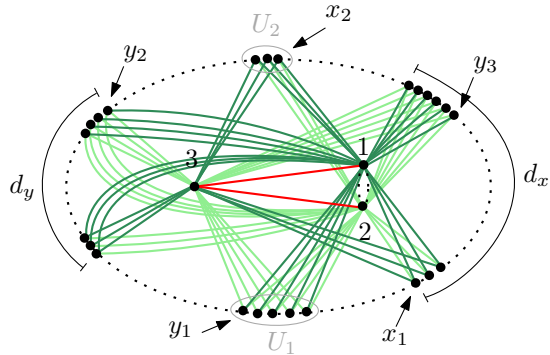}
			\caption{A simple tripartite-circle drawing of $K_{1,2,n}$ to illustrate \Cref{lem:lowerbound}.
			} 
			\label{fig:K12n_segmentsCount}
		\end{figure}

		\begin{case}\label{case:1_K12n}
			$U_1$ and $U_2$ are both empty. Then $x_1=y_1$ and $x_2=y_2$, which implies $d_n(y_1,x_1)+d_n(x_2,y_2)=0$ and $d_x+d_y=n$. Let $d=\min\{d_x,d_y\}$. For $n\ge 5$, using \Cref{eq:ineq_2fs}, Expression~(\ref{eq:k12n1}) becomes
			\begin{align*}
				&f_n(x_1,x_2)+f_n(y_1,y_3)+f_n(y_2,y_3)
				\geq 3\binom{n}{2}-d(n-d)-\bigg\lfloor \frac{n^2-d^2}{2}\bigg\rfloor\\
				&= 3\binom{n}{2}-\bigg\lfloor \frac{(n-d)(n+3d)}{2}\bigg\rfloor
				\geq  3\binom{n}{2}-\big\lfloor\nicefrac{2}{3}\cdot n^2\big\rfloor
				> \big\lfloor\nicefrac{3}{4}\cdot n^2\big\rfloor-n.
			\end{align*}
			Therefore, this case does not yield crossing-minimal drawings.
		\end{case}
		
		It remains to consider the case when at least one of the sets $U_1$ or $U_2$ is nonempty. By symmetry, we assume that $U_1$ is nonempty and analyze two cases.
		\begin{case} \label{case:2_K12n}
			$U_1$ is nonempty and $x_2\in [x_1,y_1]$. If $U_2$ is nonempty, since $U_1$ and $U_2$ are disjoint, the vertices $y_1,x_1,x_2,y_2$ appear in this order counterclockwise along the $n$-vertex circle (some of them could overlap). If $U_2=\emptyset$, then $x_2=y_2$ and so $y_1,x_1,x_2=y_2$ appear in this order counterclockwise along the $n$-vertex circle. 
			In both cases,  $$d_n(y_1,x_1)+d_x+d_n(x_2,y_2)+d_y=n.$$ Since $U_1\neq\emptyset$, then  $d_n(y_1,x_1)\geq 1$ and so $d_x+d_y\leq n-1$. Hence, Expression (\ref{eq:k12n1}) becomes
			\begin{align}\label{eq:k12n2}
				&n-d_x-d_y+f_n(x_1,x_2)+f_n(y_1,y_3)+f_n(y_2,y_3)\nonumber\\
				&\geq n-d_x-d_y+3\binom{n}{2}-d_x(n-d_x)
				-\left\lfloor\dfrac{n^2-\min(d_y,n-d_y)^2}{2}\right\rfloor\nonumber\\
				&= 3\binom{n}{2}-d_x\left(n+1-d_x\right)-\left\lfloor
				\begin{cases}
					(n-d_y)(n-2+d_y)/2&\text{ if }d_y\leq n/2\\
					(n^2-(n-d_y)(n-d_y+2))/2&\text{ if }d_y> n/2
				\end{cases}
				\right\rfloor.
			\end{align}
			
			Assume first that $d_y\leq n/2$. 
			For $n\geq 4$, Expression (\ref{eq:k12n2}) 
			is minimized when $d_x=\lfloor \frac{n+1}{2} \rfloor$ and $d_y=1$ implying
			\begin{align*}
				&n-d_x-d_y+f_n(x_1,x_2)+f_n(y_1,y_3)+f_n(y_2,y_3)\nonumber\\
				&\geq 
				3\tbinom{n}{2}
				-\left\lfloor\tfrac{n+1}{2}\right\rfloor
				\left\lceil\tfrac{n+1}{2}\right\rceil
				-\left\lfloor\nicefrac{1}{2}\cdot(n-1)^2\right\rfloor
				\geq\lfloor\nicefrac{3}{4}\cdot n^2\rfloor-n.
			\end{align*}

			Assume now that $d_y> n/2$. Then
			\begin{align}\label{eq:k12n4}
				&n-d_x-d_y+f_n(x_1,x_2)+f_n(y_1,y_3)+f_n(y_2,y_3)\nonumber\\
				&\geq 
				3\tbinom{n}{2}
				-d_x\left(n+1-d_x\right)
				-\nicefrac{1}{2}\cdot(n^2-(n-d_y)(n-d_y+2)).
			\end{align}
			When $n\geq 7$, Expression (\ref{eq:k12n4}) is minimized when $d_x = \frac{n-1}{3}$ and $d_y = \frac{2(n-1)}{3}$, and the inequality becomes
			\begin{align*}
				&n-d_x-d_y+f_n(x_1,x_2)+f_n(y_1,y_3)+f_n(y_2,y_3)\nonumber\\
				&\geq 
				3\tbinom{n}{2}
				-\tfrac{n-1}{3}\left(n+1-\tfrac{n-1}{3}\right)
				-\tfrac{1}{2}\left(n^2-\left(n-\tfrac{2(n-1)}{3}\right)\left(n-\tfrac{2(n-1)}{3}+2\right)\right)\nonumber\\
				&=\nicefrac{1}{6}\cdot(5n^2-7n+8)>\lfloor\nicefrac{3}{4}\cdot n^2\rfloor-n.
			\end{align*}
			Otherwise, when $n = 5$ or $n=6$, Expression (\ref{eq:k12n4}) is minimized when $d_x = \frac{n-3}{2}$ and $d_y = \frac{n+1}{3}$, and the inequality becomes 
			\begin{align*}
				&n-d_x-d_y+f_n(x_1,x_2)+f_n(y_1,y_3)+f_n(y_2,y_3)\nonumber\\
				&\geq 
				3\tbinom{n}{2}
				-\tfrac{n-3}{2}\left(n+1-\tfrac{n-3}{2}\right)
				-\tfrac{1}{2}\left(n^2-\left(n-\tfrac{n+1}{2}\right)\left(n-\tfrac{n+1}{2}+2\right)\right)\nonumber\\
				&=\nicefrac{1}{8}\cdot(7n^2-14n+27)>\lfloor\nicefrac{3}{4}\cdot n^2\rfloor-n.
			\end{align*}
		\end{case}
		
		\begin{case} \label{case:3_K12n}
			$U_1$ is nonempty and $x_2\in (y_1,x_1)$. In this case,
			$U_2=\emptyset$ because $U_1$ and $U_2$ are disjoint. In this case $x_2=y_2$ and so $d_y\geq 1$. Then$$d_n(y_1,x_1)+d_n(y_2,x_2)=d_n(y_1,x_1)=n-d_x+d_y.$$ 
			If $d_y> n/2$, then $d_n(y_1,x_1)+d_n(y_2,x_2)\geq d_y > n/2$. Using  \Cref{eq:k22n3}, we have
			\begin{align*}
				&d_n(y_1,x_1)+d_n(x_2,y_2)+f_n(x_1,x_2)+f_n(y_1,y_3)+f_n(y_2,y_3)\\
				&> \left\lfloor\frac{n}{2}\right\rfloor+3\left\lfloor\frac{n}{2}\right\rfloor\left\lfloor\frac{n-1}{2}\right\rfloor
				\geq \lfloor\nicefrac{3}{4}\cdot n^2\rfloor-n.
			\end{align*}
			If $1\leq d_y\leq n/2$, then $\min(d_y,n-d_y)=d_y$ and so Expression (\ref{eq:k12n1}) can be written as
			\begin{align}\label{eq:k12n5}
				&n+d_y-d_x+f_n(x_1,x_2)+f_n(y_1,y_3)+f_n(y_2,y_3)\nonumber\\
				&\geq n+d_y-d_x+3\binom{n}{2}-d_x(n-d_x)
				-\left\lfloor\dfrac{n^2-d_y^2}{2}\right\rfloor\nonumber\\
				&= 3\binom{n}{2}-d_x\left(n+1-d_x\right)-\left\lfloor
				\frac{(n+d_y)(n-2-d_y)}{2}
				\right\rfloor.
			\end{align}
			For $n\geq 3$, Expression (\ref{eq:k12n5}) is minimized when  $d_x=\lfloor \frac{n+1}{2} \rfloor$ or $\lceil \frac{n+1}{2} \rceil$ and $d_y=1$. Thus
			\begin{align*}
				&n-d_x-d_y+f_n(x_1,x_2)+f_n(y_1,y_3)+f_n(y_2,y_3)\nonumber\\
				&\geq 3\tbinom{n}{2}-\left\lfloor\tfrac{n+1}{2}\right\rfloor
				\left\lceil\tfrac{n+1}{2}\right\rceil
				-\left\lfloor\nicefrac{1}{2}\cdot(n+1)(n-3)\right\rfloor
				>\lfloor\nicefrac{3}{4}\cdot n^2\rfloor-n.\qedhere
			\end{align*}
		\end{case}
	\end{proof}
	
	
	\subsection{List of all crossing-minimal drawings}
	By allowing $n=2,3$, and $4$ in the proof of the lower bound of \Cref{thm:K12n}, it is possible to track all the cases when equality holds. In Case \ref{case:1_K12n}, the strict inequality becomes an identity for $n=2,3,$ and $4$. The other two inequalities are tight when $\min\{d_x,d_y\}=\lfloor n/3\rfloor$ or $\lceil n/3 \rceil$. Figures~\ref{fig:K12n_segments_small}(a)-(e) 
	show the corresponding crossing-minimal drawings of $K_{1,2,n}$. 
	\begin{figure}[hbtp]
		\centering	\includegraphics[page=10]{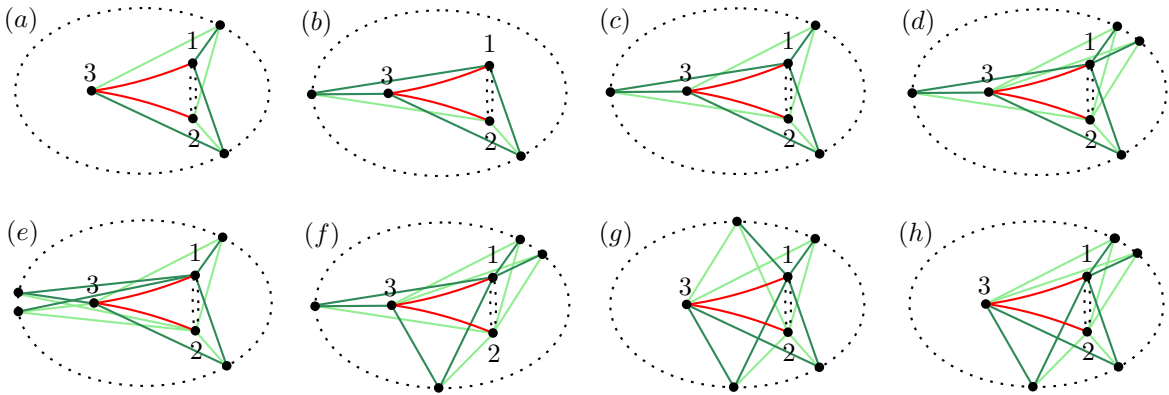}
		\caption{All crossing-minimal tripartite-circle drawings of $K_{1,2,n}$ for $n=2,3$, and  $4$.}
		\label{fig:K12n_segments_small}
	\end{figure}
	
	The other three drawings of $K_{1,2,4}$, depicted in Figures~\ref{fig:K12n_segments_small}(f)-(h),  come from Case \ref{case:2_K12n}. In this case, Inequality (\ref{eq:k12n4}) is still strict for $n=2,3,4$. \Cref{eq:k12n2} for $d_y\leq n/2$ is tight when $y_3$  divides the longest of the intervals $[y_2,y_1]$ or $[y_1,y_2]$ in almost half (so that \Cref{eq:ineq_2fs} is also tight) and
	\begin{equation*}
		(d_x,d_y)=
		\begin{cases}
			(\lfloor\frac{n+1}{2}\rfloor,1), (\lceil\frac{n+1}{2}\rceil,1)&\text{ if }n\geq 5,\\
			(\frac{n}{2},0)&\text{ if }n\geq2 \text{ even},\\
			(\frac{n}{2},2),(\frac{n}{2}+1,2)&\text{ if }n\geq8 \text{ even},\\
			(3,2),(4,0)&\text{ if }n=6,\\
			(2,1),(3,0)&\text{ if }n=4.
		\end{cases}
	\end{equation*}
	Crossing-optimal configurations exist for each of these situations. \Cref{fig:K12n_segments} (with $A=U_1, B=V_1,C=V_2,D=U_2$) shows the case $(d_x,d_y)=(\lfloor(n+1)/2\rfloor,1)$ for $n\geq 4$ (and $y_3$ slightly closer to $y_1$ than $y_2$ when $n$ is even). 
	\Cref{fig:K12n_segments2} shows the case  $(d_x,d_y)=(n/2,0)$ for $n\geq 2$ even;
	note that these two figures in the particular case when $n=4$ are shown in Figures~\ref{fig:K12n_segments_small}(f) and \ref{fig:K12n_segments_small}(g). \cref{fig:K12n_segments_small}(h) corresponds to the case $(d_x,d_y)=(3,0)$ and $n=4$.
	
	\begin{figure}[htbp]
		\centering
		\includegraphics[page=11]{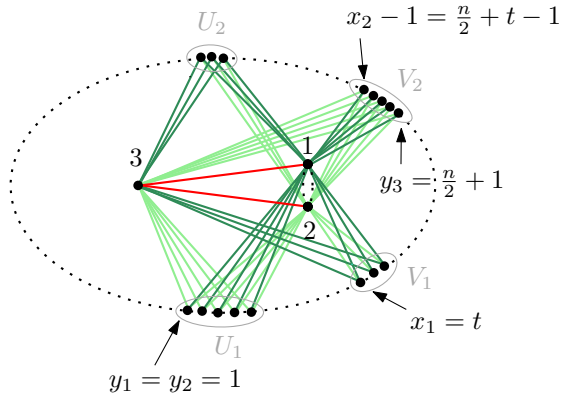}
		\caption{A crossing-minimal tripartite-circle drawing of $K_{1,2,n}$ for even $n\geq 4$  and $2\leq t \leq n/2+1$. 
		} 
		\label{fig:K12n_segments2}
	\end{figure}
	
	Finally, Case \ref{case:3_K12n} requires $n\geq 3$, and the last inequality is still strict for $n=3$ and $4$. Hence, there are no crossing-minimal drawings in this case. 
	
	\section{Proof of \Cref{th:K22n}}\label{sec:K22nProof}
	
	In this section, we determine the tripartite-circle crossing number of $K_{2,2,n}$. 
	
	\KTwoTwoN*

	The proof of \Cref{th:K22n} is a direct consequence of the results in  \Cref{sec:UB,sec:LB}. First, in \Cref{sec:UB}, we give a construction to show the upper bound. Then, in \Cref{sec:LB}, we show the lower bound by analyzing several cases to show that our construction actually minimizes the number of crossings of $K_{2,2,n}$. 
	
	\subsection{Upper bound of \Cref{th:K22n}}
	\label{sec:UB}
	
	The following lemma states the upper bound of \Cref{th:K22n}.
	\begin{lemma}\label{lem:22nUB}
		For every integer $n\geq 3$,
		\[
		\crN{3}(K_{2,2,n}) \le
		6\bigg\lfloor\frac{n}{2}\bigg\rfloor\left\lfloor\frac{n-1}{2}\right\rfloor+2n-3.
		\]
	\end{lemma}
	
	\begin{proof} 
		In \Cref{fig:K22n_segments}, we define two drawings that achieve the bound for odd values of $n\geq 3$ in drawing (a) and for all values of $n\geq 4$ in drawing (b). Both drawings have \textsc{m} and \textsc{n} as the inner circles with two vertices each. We color the edges between $\textsc{m}$ and $\textsc{n}$ red and all others green.
		We divide the vertices on $\textsc{p}$ into groups which are called $A,B,C,D$, where groups~$C,D$ are empty in drawing (a). All green edges are straight line segments except for $1x_1$ and $2x_2$ in (a) and $1y_2$, $2x_3$, $3y_4$, and $4x_1$ in (b). If these edges are replaced by straight line segments, the number of crossings is easy to determine.
		These replacements add two crossings to (a) and four to (b).  For simplicity, we define $c_n:=6\left(\binom{\floor{n/2}}{2}+\binom{\lceil{n/2}\rceil}{2}\right)=6\lfloor\frac{n}{2}\rfloor\left\lfloor\frac{n-1}{2}\right\rfloor$.
		
		\begin{figure}[htb]
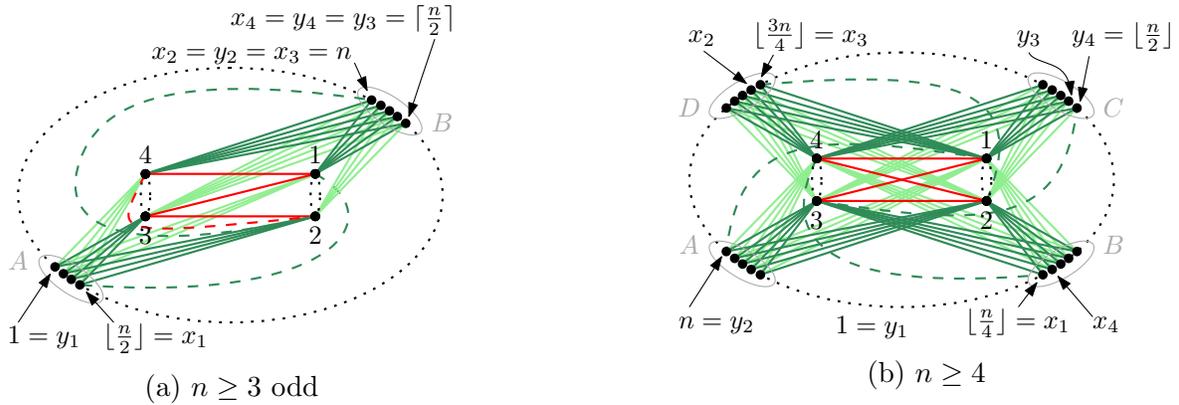

			\centering		
			\begin{subfigure}[c]{.45\textwidth}
				\centering
				\includegraphics[page=12]{Kssn_all}
				\subcaption{$n\geq 3$ odd }
				\label{fig:K22n_segmentsA}
			\end{subfigure}\hfill
			\begin{subfigure}[c]{.45\textwidth}
				\centering
				\includegraphics[page=13]{Kssn_all}
				\subcaption{$n\geq 4$}
				\label{fig:K22n_segmentsB}
			\end{subfigure}
			\caption{Two crossing-minimal tripartite-circle drawings of $K_{2,2,n}$, where all but at most four edges are straight line segments.
			}
			\label{fig:K22n_segments}
		\end{figure}

		After replacing these (curved) green edges in \Cref{fig:K22n_segments}(a) by straight line segments, the drawing has $3\floor{n/2}+\ceil{n/2}=2n-1$ red-green crossings (when $n$ is odd) and no red-red crossing. In order to count the green-green crossings, note that every crossing is determined by any two vertices on the inner circles and any two vertices in $A$ or any two vertices in $B$. Thus there are $c_n$ green-green crossings and the number of crossings in the drawing displayed in \Cref{fig:K22n_segments}(a) is
		$c_n+2n-1-2=6\lfloor\frac{n}{2}\rfloor\left\lfloor\frac{n-1}{2}\right\rfloor+2n-3.$
		
		Similarly, after the replacement in~\Cref{fig:K22n_segments}(b), the drawing has $2n$ red-green crossings and $1$ red-red crossing. The green-green crossings are determined by either two vertices on the same inner circle and any two vertices in $A\cup D$ or any two vertices on $B\cup C$; or by two vertices on different inner circles and any two vertices in $A\cup B$ or any two vertices on $C\cup D$. Note that $\{|A\cup D|,|B\cup C|\}=\{|A\cup B |,|C\cup D|\}=\{\floor{n/2},\lceil{n/2}\rceil\}$ and thus there are $c_n$ green-green crossings. The number of crossings in the drawing displayed in \Cref{fig:K22n_segments}(b) is
		$c_n+2n+1-4=6\lfloor\frac{n}{2}\rfloor\left\lfloor\frac{n-1}{2}\right\rfloor+2n-3.$
	\end{proof}
	
	\subsection{Lower bound of \Cref{th:K22n}}\label{sec:LB}
	
	Now, we turn our attention towards proving the following lower bound, i.e., we want to show that the drawings given in \Cref{sec:UB} have the minimum number of crossings among tripartite-circle drawings.
	
	\begin{restatable}{lemma}{LowerBound}\label{lem:22nLB}
		For every integer $n\geq 3$,
		\[
		\crN{3}(K_{2,2,n}) \geq
		6\bigg\lfloor\frac{n}{2}\bigg\rfloor\left\lfloor\frac{n-1}{2}\right\rfloor+2n-3.
		\]
	\end{restatable}
	
	In order to prove \Cref{lem:22nLB}, we need a series of lemmas. \Cref{le:k22n} categorizes all simple tripartite-circle drawings into  Types 1, 2, 3, or 4. \Cref{lem:redgreencrossings} provides expressions for the exact number of crossings in each of the four types; Lemmas \ref{lem:redgreencrossings}, \ref{lem:mixed}, and \ref{lem:ysNEW} give bounds for different components of  the expressions in \Cref{lem:redgreencrossings}; \Cref{lem:type234} settles the lower bound for the drawings of Types 2, 3, 4 and \Cref{lem:type1} for the drawings of Type 1.

	It is sufficient to consider simple tripartite-circle drawings. To analyze all such drawings, we partition the set of simple drawings of $K_{2,2,n}$ by the induced subdrawings of $K_{2,2,0}$ depicted in \Cref{fig:k220drawings}.
	
	\begin{figure}[htbp]
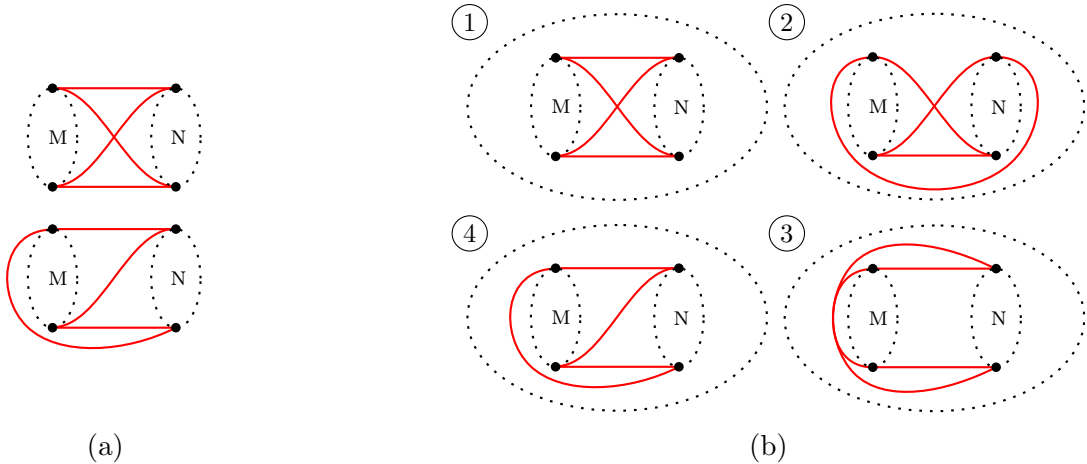

		\centering
		\begin{subfigure}[b]{.25\textwidth}
			\centering
			\includegraphics[page=14]{Kssn_all}
			\caption{}
			\label{fig:k22drawings}
		\end{subfigure}
		\hfill
		\begin{subfigure}[b]{.7\textwidth}
			\centering
			\includegraphics[page=15]{Kssn_all}
			\caption{}
			\label{fig:k220drawings}
		\end{subfigure}
		\caption{(a) The two simple bipartite-circle drawings of $K_{2,2}$. (b) The four simple tripartite-circle drawings of $K_{2,2,0}$.}
	\end{figure}
	
	\begin{lemma}\label[lemma]{le:k22n}
		Up to topological equivalence, there are exactly four simple tripartite-circle drawings of $K_{2,2,0}$. 
	\end{lemma}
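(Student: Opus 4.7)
The plan is to reduce the enumeration to a finite case analysis driven by the relative positions of the three circles. Since the part on $\textsc{p}$ is empty, no edge is incident to $\textsc{p}$, and the restriction of the drawing to $\textsc{m}$ and $\textsc{n}$ is a simple bipartite-circle drawing of $K_{2,2}$; the circle $\textsc{p}$ contributes only its placement in the plane. Invoking the projective transformation argument recalled before the statement, I would first normalize so that $\textsc{p}$ is the outermost circle, enclosing both $\textsc{m}$ and $\textsc{n}$.

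Next, I would split into two geometric subcases based on the arrangement of the two inner circles: (i) $\textsc{m}$ and $\textsc{n}$ are non-nested (each lying in the exterior of the other), and (ii) one of $\textsc{m}, \textsc{n}$ strictly contains the other, which by the symmetry swapping the two parts of size $2$ may be taken to be $\textsc{m}$ inside $\textsc{n}$. Within each subcase, $K_{2,2}$ has only four edges, and each edge has a very limited number of topologically distinct routings modulo the circle arrangement: in (i), either going through the corridor between the two inner circles or wrapping around one of them, and in (ii), using one of the two arcs of the annulus between $\textsc{m}$ and $\textsc{n}$. After fixing a labeling of the vertices on each circle and factoring out the symmetry that permutes the two vertices within $\textsc{m}$ or within $\textsc{n}$, I would expect the number of surviving configurations to collapse to exactly four, after also discarding non-simple drawings (i.e., those where two independent edges meet more than once, or two adjacent edges cross).

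Finally, I would verify that each of the surviving configurations is realized by precisely one of the four drawings depicted in \cref{fig:k220drawings}, and that no two of those drawings are topologically equivalent—for instance by comparing their numbers of crossings together with the nesting type of $\textsc{m}$ and $\textsc{n}$, which are invariants of the topological equivalence class. The main obstacle I anticipate is the bookkeeping in subcase (i): different rotational positions of the vertices on $\textsc{m}$ and $\textsc{n}$ can produce drawings that look superficially different but are in fact topologically equivalent once the label-swapping symmetry within parts and the homeomorphism freedom outside the inner circles are applied, and I would need to argue carefully that no genuine case is missed and no case is double-counted.
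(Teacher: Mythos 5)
Your overall strategy (reduce to a finite case analysis on the circle arrangement and the routings of the four edges) is in the right spirit, but it diverges from the paper's route and, as written, has three concrete problems. The paper's proof is much lighter: it first classifies simple bipartite-circle drawings of $K_{2,2}$ \emph{on the sphere} (there are exactly two, \cref{fig:k22drawings}), and then observes that a drawing of $K_{2,2,0}$ is determined by which cell of that $K_{2,2}$ drawing contains the third circle; counting inequivalent admissible cells gives four. Working on the sphere first is what makes the nested/non-nested bookkeeping disappear, and it replaces your ``routing'' enumeration in a pair of pants (the disk bounded by $\textsc{p}$ minus the two inner disks, where arcs between the two inner boundaries have infinitely many isotopy classes a priori) by the much easier classification of four pairwise weakly-disjoint arcs in an annulus.

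The gaps: (1) Your subcase (ii) must be discarded, not enumerated. Once $\textsc{p}$ is normalized to enclose both $\textsc{m}$ and $\textsc{n}$, nesting $\textsc{m}$ inside $\textsc{n}$ produces three pairwise nested circles, which the paper's standing convention excludes (such a configuration cannot occur in a drawing of $K_{2,2,n}$ with $n\geq 1$, since edges from $\textsc{p}$ to the innermost circle would cross the middle one). If you keep (ii) as a live case it contributes legitimate simple drawings of $K_{2,2,0}$ that are \emph{not} among the four in \cref{fig:k220drawings}, and your count will exceed four. (2) The central step --- that case (i) yields exactly four inequivalent configurations --- is exactly the content of the lemma, and your proposal defers it (``I would expect the number \dots to collapse to exactly four''); this is the part that needs an actual argument, e.g.\ the paper's two-step sphere/cell decomposition. (3) Your proposed invariants for distinguishing the four drawings are insufficient: all four have the same nesting type (two disjoint inner circles inside $\textsc{p}$), and three of the four (types 1, 2, 3) have exactly one crossing among the $K_{2,2}$ edges. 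What separates them is the finer datum of which face of the $K_{2,2}$ subdrawing contains $\textsc{p}$ (equivalently, how many vertices of $\textsc{m}\cup\textsc{n}$ are separated from $\textsc{p}$ by edges), so you would need to argue with that invariant instead.
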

	
	\begin{proof} Up to topological equivalence, there are two different simple bipartite-circle drawings of $K_{2,2}$ on the sphere, namely the ones depicted in \Cref{fig:k22drawings}. 
		The edges in the first drawing define five regions, and the edges in the second drawing define four regions. 
		
		Placing the third circle in each of these regions, then finding the equivalent drawing such that the third circle encloses \textsc{m} and \textsc{n}, yields four distinct drawings, shown in \Cref{fig:k220drawings}. For example, if we place the third circle in the upper triangular region of the first drawing in \Cref{fig:k22drawings}, we then need to reroute the upper horizontal edge around \textsc{m} and \textsc{n} and obtain drawing \textcircled{{\scriptsize 2}} in \Cref{fig:k220drawings}.
	\end{proof}
	
	\paragraph{Drawings of type $i\in\{1,2,3,4\}$.} By \Cref{le:k22n}, any simple tripartite-circle drawing of $K_{2,2,n}$ can be seen as an extension of one of the four drawings of $K_{2,2,0}$ in \Cref{fig:k220drawings}. For example, Figure 3(a) displays a drawing of type~4
	and Figure 3(b) a drawing of type~1. In our figures, we color the edges of $K_{2,2,0}$ red and the remaining $4n$ edges green. For an illustration consider \Cref{fig:K22nFormula}; note that some edges are omitted for clarity. We first count the number of crossings with red edges (i.e., the red-red and red-green crossings).
	
	\begin{lemma}\label{lem:redgreencrossings}
		In a simple tripartite-circle drawing $D$ of $K_{2,2,n}$ with $n\geq3$, the number of red-red and red-green crossings is at least
		\begin{equation}\label{eq:red}
			\begin{cases} 2\cdot(d_n(y_1,x_1)+d_n(x_2,y_2)+d_n(y_3,x_3)+d_n(x_4,y_4))+1 &\mbox{if $D$ is of type 1, } \\
				2n+1 &\mbox{if $D$ is of type 2 or 3, } \\
				2\cdot(d_n(y_1,x_1)+d_n(x_2,y_2))+n &\mbox{if $D$ is of type 4.}
			\end{cases}
		\end{equation}
		The number of green-green crossings is \begin{equation}
			\label{eq:k22n1}
			f_n(x_1,x_2)+f_n(x_3,x_4)+ f_n(y_1,y_3)+f_n(y_1,y_4)+f_n(y_2,y_3)+f_n(y_2,y_4),
		\end{equation} 
		and the total number of crossings is the sum of (\ref{eq:red}) and (\ref{eq:k22n1}).
	\end{lemma}
	
	\begin{proof}
		\begin{figure}[htbp]
			\centering
			\includegraphics[page=16]{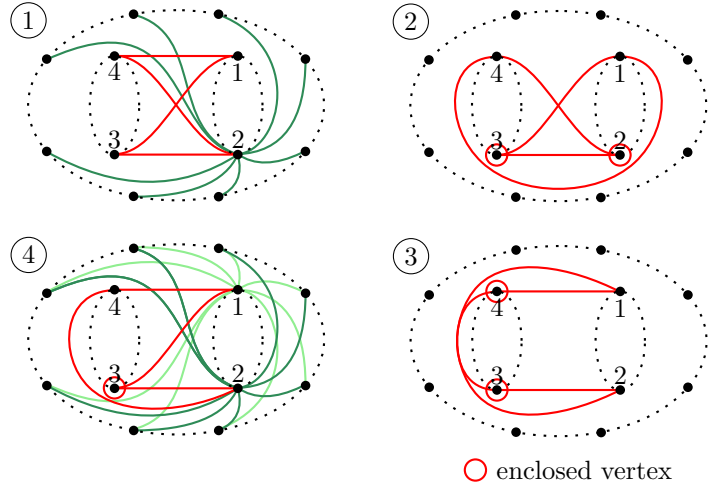}
			\caption{The four simple tripartite-circle drawings of $K_{2,2,n}$. Red edges connect vertices on the two inner circles; all other edges are green (some of them omitted for clarity). Drawings $2$ and $3$ have 
				two enclosed vertices. A green edge from these vertices must cross a red edge.}
			\label{fig:K22nFormula}
		\end{figure}
		
		There is exactly one red-red crossing for types 1, 2, and 3 and none for type~4. \emph{Enclosed} vertices in drawings of types 2, 3, and 4 are those separated from the outer circle by red edges. The green edges incident to enclosed vertices must cross at least one red edge, so each enclosed vertex contributes at least $n$ to the number of red-green crossings. For drawings of types $2$ and $3$, the two enclosed vertices guarantee at least $2n$ red-green crossings, and adding the $1$ red-red crossing yields the claimed count.
		
		Recall that we consider the vertices on the outer circle $P$ in counterclockwise order. In a drawing of type 1, a green edge from vertex $i\in \{1,3\}$ crosses two red edges if the other vertex lies in the interval $[y_i,x_i)$; otherwise it does not cross any red edges. Note that the number of these vertices is $d_n(y_i,x_i)$. Likewise, a green edge from a vertex $i\in \{2,4\}$ crosses two red edges if the other vertex lies in the interval $[x_i,y_i)$. For type $1$ drawings therefore the number of red-green crossings is at least $2\cdot(d_n(y_1,x_1)+d_n(x_2,y_2)+d_n(y_3,x_3)+d_n(x_4,y_4))$. Adding the $1$ red-red crossing yields the claimed count.
		
		The same holds for green edges incident to vertices 1 or 2 in a drawing of type 4, so the number of red-green crossings in a type $4$ drawing is at least $2\cdot(d_n(y_1,x_1)+d_n(x_2,y_2))+n$, where the $n$ term counts red-green crossings in which the green edge is incident to the enclosed vertex $3$.
		
		We use \Cref{th:yitheorem} (and the note immediately following it) to count the green-green crossings as 
		\begin{equation*}
			f_n(x_1,x_2)+f_n(x_3,x_4)+ f_n(y_1,y_3)+f_n(y_1,y_4)+f_n(y_2,y_3)+f_n(y_2,y_4).
			\qedhere
		\end{equation*} 
	\end{proof}
	
	Now to complete the proof of \Cref{lem:22nLB} and therefore of \Cref{th:K22n} it is sufficient to show that the sum of (\ref{eq:red}) and (\ref{eq:k22n1}) is at least  
	$6\lfloor\frac{n}{2}\rfloor\left\lfloor\frac{n-1}{2}\right\rfloor+2n-3$ for any of the four types of drawings. 
	By \Cref{eq:k22n3}, each term of \eqref{eq:k22n1} satisfies $f_n(a,b)\geq \big\lfloor\frac{n}{2}\big\rfloor\left\lfloor\frac{n-1}{2}\right\rfloor$.
	
	We start with two lemmas bounding some of the terms in (\ref{eq:red}) and (\ref{eq:k22n1}). For notational convenience, we write $Z_i=d_n(y_{i+1},y_i)$ and $z_i=\min\{d_n(y_{i+1},y_i),d_n(y_i,y_{i+1})\}=\min\{Z_i,n-Z_i\}$ for $i = 1, 3$ 
	(the minimum distance between vertices $y_{i+1}$ and $y_i$ on a circle with $n$ vertices).
	
	\begin{lemma}\label{lem:mixed} For a simple tripartite-circle drawing of $K_{2,2,n}$ with $n\geq3$, let $Z_i=d_n(y_{i+1},y_i)$ for $i \in \{1,3\}$. The following inequality holds:
		$$2d_n(y_i,x_i)+2d_n(x_{i+1},y_{i+1})+f_n(x_i,x_{i+1})\geq \bigg\lfloor\frac{n}{2}\bigg\rfloor\left\lfloor\frac{n-1}{2}\right\rfloor+n-1-2Z_i.$$
		Moreover, the inequality can be strengthened in the following cases:
		\begin{align*}
			&2d_n(y_i,x_i)+2d_n(x_{i+1},y_{i+1})+f_n(x_i,x_{i+1})\\ &\geq 
			\begin{cases}
				\big\lfloor\frac{n}{2}\big\rfloor\left\lfloor\frac{n-1}{2}\right\rfloor+n-1-2Z_i+2n;
				&\text{if ccw order } \neq x_{i+1}y_{i+1}y_ix_i \\
				\big\lfloor\frac{n}{2}\big\rfloor\left\lfloor\frac{n-1}{2}\right\rfloor+n-1-2Z_i+\left\lfloor\left(Z_i-\frac{n-2}{2}\right)^2\right\rfloor &  \text{if ccw order } = x_{i+1}y_{i+1}y_ix_i \\ &\text{and $Z_i\geq n/2$}
			\end{cases}
		\end{align*}
	\end{lemma}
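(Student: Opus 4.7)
The plan is to reduce the inequality to a one-variable statement about the function $h(s) := 2s + \binom{s}{2} + \binom{n-s}{2}$. Write $\alpha := d_n(y_i, x_i)$, $\beta := d_n(x_{i+1}, y_{i+1})$, and $s := d_n(x_{i+1}, x_i)$, so that the left-hand side equals $2(\alpha + \beta) + \binom{s}{2} + \binom{n-s}{2}$. Placing $x_{i+1}$ at position $0$ on circle $\textsc{p}$ and tracing counterclockwise gives $y_{i+1}$ at $\beta$, $y_i$ at $(\beta + Z_i)\bmod n$, and $x_i$ at $(\beta + Z_i + \alpha)\bmod n$. Comparing with $s$ yields the key identity $\alpha + \beta = s - Z_i + kn$ for a unique $k \in \{0,1,2\}$, with $k = 0$ precisely when $\alpha + \beta + Z_i \leq n$, which is exactly when the four points appear counterclockwise in the order $x_{i+1}, y_{i+1}, y_i, x_i$.

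Substituting, the left-hand side rewrites as $h(s) - 2Z_i + 2kn$. A routine analysis of the first difference $h(s+1) - h(s) = 2s - n + 3$ (in the spirit of \cref{le:min}) shows that $h$ attains its minimum on $\{0,\ldots,n-1\}$ at $s^\star \in \{\lfloor (n-2)/2\rfloor, \lceil (n-2)/2\rceil\}$ with value $h(s^\star) = \lfloor n/2 \rfloor \lfloor (n-1)/2 \rfloor + n - 1$. Thus the right-hand side of the base inequality is exactly $h(s^\star) - 2Z_i$, and the inequality becomes $h(s) - h(s^\star) + 2kn \geq 0$, which follows immediately from $h(s) \geq h(s^\star)$.

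When the counterclockwise order differs from the special one, $k \geq 1$, and the extra $2kn \geq 2n$ gives the first strengthening at once. In the special-order case ($k = 0$), the constraint $s = \alpha + \beta + Z_i \geq Z_i$ together with $Z_i \geq n/2 > s^\star$ places $s$ in the non-decreasing branch of $h$, so $h(s) \geq h(Z_i)$. Telescoping the first differences gives $h(Z_i) - h(s^\star) = \sum_{t = s^\star}^{Z_i - 1}(2t - n + 3)$, which evaluates to $(Z_i - (n-2)/2)^2$ when $n$ is even and to $(Z_i - \lfloor n/2 \rfloor)(Z_i - \lfloor n/2\rfloor + 1)$ when $n$ is odd; both agree with $\lfloor (Z_i - (n-2)/2)^2 \rfloor$, yielding the second strengthening.

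The main obstacle is the bookkeeping that correctly identifies the parameter $k$ with the counterclockwise arrangement of the four points $x_i, y_i, x_{i+1}, y_{i+1}$, and in particular that $k = 0$ coincides exactly with the distinguished order $x_{i+1}, y_{i+1}, y_i, x_i$ and no other. Once the modular identity $\alpha + \beta + Z_i \equiv s \pmod n$ is in hand, the remaining cases collapse to brief algebraic manipulations with $h$, and the two strengthenings fall out from the monotonicity of $h$ on each side of its minimum.
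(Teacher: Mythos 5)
Your proof is correct and follows essentially the same route as the paper: your modular identity $\alpha+\beta+Z_i = s+kn$ with $k\in\{0,1,2\}$ is exactly the paper's three-term distance identity, and minimizing $h(s)=2s+f_n$ (via first differences rather than the paper's quadratic-vertex computation) together with the monotonicity argument for $s\ge Z_i\ge n/2$ matches the paper's treatment of both strengthenings.
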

	
	\begin{proof}
		Note that the (general) statement is equivalent to showing $$2\big(d_n(y_i,x_i)+d_n(x_{i+1},y_{i+1})+d_n(y_{i+1},y_i)\big)+f_n(x_i,x_{i+1})\geq \bigg\lfloor\frac{n}{2}\bigg\rfloor\left\lfloor\frac{n-1}{2}\right\rfloor+n-1.$$
		A short case analysis (by considering the six counterclockwise orders) verifies that for every four points $a,b,c,d$, the following holds:
		\begin{align}\label{eq:ThreeTerms}
			d_n(a,b)+d_n(b,c)+d_n(c,d)
			=\begin{cases}
				d_n(a,d)     &
				\text{if the ccw order is } abcd  \\
				d_n(a,d)+2n    &
				\text{if the ccw order is } adcb\\ 
				d_n(a,d)+n   &\text{otherwise}.
			\end{cases}
		\end{align}
		
		With $a=x_{i+1},b=y_{i+1},c=y_i,d=x_i$, \Cref{eq:ThreeTerms} implies that 
		\begin{align}\label{eq:lowerBound}
			2\big(d_n(x_{i+1},y_{i+1})+d_n(y_{i+1},y_i)+d_n(y_i,x_i)\big)&+f_n(x_i,x_{i+1}) \\ 
			&\geq 2d_n(x_{i+1},x_i) +f_n(x_i,x_{i+1}).\nonumber
		\end{align}
		The right side of Inequality (\ref{eq:lowerBound}) can be expressed as
		the quadratic function
		\begin{equation}\label{eq:min}
			2d_n(x_{i+1}, x_i)+f_n(x_i,x_{i+1}) =  d_n(x_{i+1}, x_i)^2 + (2-n)d_n(x_{i+1}, x_i) + \binom{n}{2},
		\end{equation} 
		which is minimized for $d_n(x_{i+1}, x_i)=\left\lfloor\frac{n-2}{2}\right\rfloor$.
		Evaluation at $d_n(x_{i+1}, x_i)=\left\lfloor\frac{n-2}{2}\right\rfloor$  yields
		\begin{align*}
			2d_n(x_{i+1}, x_i)+f_n(x_i,x_{i+1})
			&\geq\bigg\lfloor\frac{n}{2}\bigg\rfloor\left\lfloor\frac{n-1}{2}\right\rfloor+n -1.
		\end{align*}
		This finishes the proof of the general statement.

		For the strengthening, we consider the two cases.
		If the counterclockwise order is different from $x_{i+1}y_{i+1}y_ix_i$,  note that  we can add at least a $2n$ term to the right side of the Inequality (\ref{eq:lowerBound}).
		If the counterclockwise order is $x_{i+1}y_{i+1}y_ix_i$ and $Z_i\geq n/2$, then $d_n(x_{i+1}, x_i) \ge d_n(y_{i+1},y_i)=Z_i\geq n/2$ and the expression in \Cref{eq:min} is minimized for $d_n(x_{i+1}, x_i)=Z_i$.
	\end{proof}

	Now, we show a lower bound on the remaining four $f_n$-terms in (\ref{eq:k22n1}).
	To do so, we define 
	\[\Delta_k:=k\bmod 2.\]
	Recall that $z_1=\min\{d_n(y_2,y_1),d_n(y_1,y_2)\}$ and $z_3=\min\{d_n(y_4,y_3),d_n(y_3,y_4)\}$.
	\begin{lemma}\label[lemma]{lem:ysNEW} For a simple tripartite-circle drawing of $K_{2,2,n}$ with $n\geq3$, let $S=f_n(y_1,y_3)+f_n(y_1,y_4)+f_n(y_2,y_3)+f_n(y_2,y_4)-4\lfloor\frac{n}{2}\rfloor\left\lfloor\frac{n-1}{2}\right\rfloor$.
		\begin{enumerate}
			\item \label{lem:7.i}If $y_1,y_2\in [y_3,y_4]$ or $y_1,y_2\in [y_4,y_3]$, then it holds that
			\begin{align*}
				S&\geq z_1^2+z_3^2 -\Delta_n\Delta_{z_1+z_3}
			\end{align*}
			\item \label{lem:7.ii}If $y_1\in (y_3,y_4)$ and $y_2\in (y_4,y_3)$ (or vice versa), then it holds that 
			\begin{align*}
				S&\geq \nicefrac{1}{4}\left(z_1^2+(n-z_1)^2+z_3^2+(n-z_3)^2\right)
				-\nicefrac{1}{2}\Delta_n.
			\end{align*}
		\end{enumerate}
		Moreover, in all cases it holds that 
		\begin{align*}
			S\geq 
			z_1^2
			-\Delta_n\Delta_{z_1}.
		\end{align*}
	\end{lemma}
	
	\begin{proof}
		First note that exchanging $y_1$ and $y_2$ (or $y_3$ and $y_4$) does not influence $f_n(y_1,y_3)+f_n(y_1,y_4)+f_n(y_2,y_3)+f_n(y_2,y_4)$.
		Consequently, by swapping $y_1$ and $y_2$ or $y_3$ and $y_4$, all counterclockwise orders can be transformed to one of the following two:  $y_1y_2y_3y_4$ (non-alternating, i.e., \Cref{case:7.i}) and $y_1y_3y_2y_4$ (alternating, i.e., \Cref{case:7.ii})).
		
		\begin{case}\label{case:7.i}Without loss of generality, we consider the counterclockwise order $y_1,y_2,y_3,y_4$ and define $a:=d_n(y_1,y_2)$, $b:=d_n(y_2,y_3)$, $c:=d_n(y_3,y_4)$, and $d:=d_n(y_4,y_1)$. See  \Cref{fig:K22nBOUNDS}a.
			
			\begin{figure}[htb]
				\centering
				\begin{subfigure}[c]{.32\textwidth}
					\centering
					\includegraphics[page=17]{Kssn_all}
					\caption{Case \ref{case:7.i}, non-alternating.}
					\label{fig:K22nBOUNDS_A}
				\end{subfigure}\hfil
				\begin{subfigure}[c]{.32\textwidth}
					\centering
					\includegraphics[page=18]{Kssn_all}
					\caption{Case \ref{case:7.ii}, alternating.}
					\label{fig:K22nBOUNDS_B}
				\end{subfigure}
				\hfil
				\caption{Illustration of the proof of \Cref{{lem:ysNEW}}.}
				\label{fig:K22nBOUNDS}
			\end{figure}
			
			Then, it holds that
			\begin{align*}
				f_n(y_1,y_3)+&f_n(y_1,y_4)+f_n(y_2,y_3)+f_n(y_2,y_4)\\
				&=4\binom{n}{2}-(a+b)(n-(a+b))-d(n-d)-b(n-b)-(b+c)(n-(b+c))\\
				&= 2n^2-2n + a^2+b^2+c^2+d^2 - (a+b+c+d)n + 2b(a+b+c-n)\\
				&= 2n^2-2n + a^2+b^2+c^2+d^2 - n^2 - 2bd\\
				&=4\bigg\lfloor\frac{n}{2}\bigg\rfloor\left\lfloor\frac{n-1}{2}\right\rfloor-\Delta_n + a^2+c^2 +(b-d)^2
			\end{align*}
			The inequality 
			$ a^2+c^2 +(b-d)^2 \geq z_1^2+z_3^2+\Delta_n\Delta_{z_1+z_3+1}$  holds when $a>z_1$, $c>z_3$, $b\neq d$, $z_1+z_3$ is odd, or $n$ is even. At least one of these conditions holds as $a=z_1,c=z_3,b=d$, and $z_1+z_3$ even imply that $n=z_1+z_3+2b$ is also even. Hence,
			\begin{align*}
				S\geq z_1^2+z_3^2+\Delta_n\Delta_{z_1+z_3+1}-\Delta_n =z_1^2+z_3^2-\Delta_n\Delta_{z_1+z_3}
			\end{align*}
			This finishes the proof of \Cref{case:7.i}.\end{case}
		
		Note that $S\geq z_1^2+z_3^2-\Delta_n\Delta_{z_1+z_3}$ implies that  $S\geq z_1^2-\Delta_n\Delta_{z_1}$ because $z_3^2-\Delta_n\Delta_{z_1+z_3}\geq 0\geq -\Delta_n\Delta_{z_1}$ when $z_3$ is odd, and $\Delta_{z_1+z_3}=\Delta_{z_1}$ when $z_3$ is even. 
		
		\begin{case}\label{case:7.ii}Without loss of generality, we consider the order $y_1,y_3,y_2,y_4$ and define
			$a:=d_n(y_1,y_3)$, $b:=d_n(y_3,y_2)$, $c:=d_n(y_2,y_4)$, and $d:=d_n(y_4,y_1)$. See  \Cref{fig:K22nBOUNDS}b. Then
			\begin{align*}
				f_n(y_1,y_3)+&f_n(y_1,y_4)+f_n(y_2,y_3)+f_n(y_2,y_4)\\&=
				4\binom{n}{2}-a(n-a)-d(n-d)-b(n-b)-c(n-c)
				\\&
				=n^2-2n+a^2+b^2+c^2+d^2
				\\&=
				4\bigg\lfloor\frac{n}{2}\bigg\rfloor\left\lfloor\frac{n-1}{2}\right\rfloor
				-\Delta_n+a^2+b^2+c^2+d^2
			\end{align*}
			Note that 
			$
			a^2+b^2\geq a^2+b^2-\nicefrac{1}{2}\left((a-b)^2-\Delta_{a+b}\right)=\nicefrac{1}{2}\left((a+b)^2+\Delta_{a+b}\right).
			$
			Similarly,
			$c^2+d^2\geq\nicefrac{1}{2}\left((c+d)^2+\Delta_{c+d}\right)$,
			$b^2+c^2\geq\nicefrac{1}{2}\left((b+c)^2+\Delta_{b+c}\right)$, and
			$d^2+a^2\geq\nicefrac{1}{2}\left((d+a)^2+\Delta_{d+a}\right)$.
			We thus have
			\begin{align*}
				S&\geq a^2+b^2+c^2+d^2-\Delta_n\\
				&\geq\nicefrac{1}{4}\left((a+b)^2+(c+d)^2+(b+c)^2+(d+a)^2+\Delta_{a+b}+\Delta_{c+d}+\Delta_{b+c}+\Delta_{d+a}-4\Delta_n\right)\\
				&=\nicefrac{1}{4}\left(z_1^2+(n-z_1)^2+z_3^2+(n-z_3)^2\right)+\nicefrac{1}{4}\left(\Delta_{z_1}+\Delta_{n-z_1}+\Delta_{z_3}+\Delta_{n-z_3}-4\Delta_n\right)\\
				&\geq\nicefrac{1}{4}\left(z_1^2+(n-z_1)^2+z_3^2+(n-z_3)^2\right)	-\nicefrac{1}{2}\Delta_n.
			\end{align*}
			The last inequality is clear if $n$ is even. When $n$ is odd, then for $i=1,3$ one of $z_i$ and $n-z_i$ is even and the other odd. Therefore, it
			holds that 
			$\nicefrac{1}{4}(\Delta_{z_1}+\Delta_{n-z_1}+\Delta_{z_3}+\Delta_{n-z_3}-4\Delta_n)=-\nicefrac{1}{2}\Delta_n.$ 
			Hence, \Cref{case:7.ii} is proved.\end{case}
		
		Finally, note that in \Cref{case:7.ii} it holds that
		$a^2+b^2+c^2+d^2\geq\frac{1}{2}(z_1^2+(n-z_1)^2)\geq z_1^2$. Therefore, $S \geq z_1^2-\Delta_n\Delta_{z_1}$,

		Finally, note that in \Cref{case:7.ii} it holds that
		$$S\geq a^2+b^2+c^2+d^2-\Delta_n\geq\nicefrac{1}{2}(z_1^2+(n-z_1)^2+\Delta_{z_1}+\Delta_{n-z_1})-\Delta_n\geq z_1^2-\Delta_n\Delta_{z_1}.$$
		The last inequality follows from the fact that $n-z_1\geq z_1$ and either $n$ is even or $(n-z_1)^2- z_1^2=n(n-2z_1)\geq 2$. This finishes the proof.
	\end{proof}
	
	Now, we are ready to prove the lower bound for each type of drawing. We begin with the types that are simpler to analyze.
	
	\begin{lemma}\label{lem:type234}
		If $D$ is a simple tripartite-circle drawing of $K_{2,2,n}$ of type $2$, $3$, or $4$ with $n\geq3$, then the number of crossings in $D$ is at least $6\lfloor\frac{n}{2}\rfloor\left\lfloor\frac{n-1}{2}\right\rfloor+2n-3$.
	\end{lemma}
	
	\begin{proof}
		For a drawing of type 2 or 3, by \Cref{eq:red,eq:k22n1} from \Cref{lem:redgreencrossings}, the number of crossings is at least
		\begin{equation*}
			1+2n+f_n(x_1,x_2)+f_n(x_3,x_4)+ f_n(y_1,y_3)+f_n(y_1,y_4)+f_n(y_2,y_3)+f_n(y_2,y_4).
		\end{equation*}
		Using \Cref{eq:k22n3}, this expression is bounded below by $6\lfloor\frac{n}{2}\rfloor\left\lfloor\frac{n-1}{2}\right\rfloor+2n+1>6\lfloor\frac{n}{2}\rfloor\left\lfloor\frac{n-1}{2}\right\rfloor+2n-3$, as desired. Note that drawings of these types do not attain the minimum number of crossings.\medskip
		
		\medskip
		Next, we consider drawings of type 4. By \Cref{lem:redgreencrossings} the number of crossings is
		\begin{align*}
			n&+2(d_n(y_1,x_1)+d_n(x_2,y_2))+f_n(x_1,x_2)
			+f_n(x_3,x_4)\\ &+ f_n(y_1,y_3)+f_n(y_1,y_4)+f_n(y_2,y_3)+f_n(y_2,y_4).
		\end{align*}
		We show the lower bound by considering two cases for $Z_1=d_n(y_{2},y_1)$. In each of the cases, we use \Cref{eq:k22n3} to bound $f_n(x_3,x_4)$.
		
		In the first case, it holds that $Z_1\leq (n-1)/2$. Then $z_1=Z_1$ and by \Cref{lem:mixed,lem:ysNEW}, the number of crossings is at least
		\begin{equation*}
			n+6\bigg\lfloor\frac{n}{2}\bigg\rfloor\left\lfloor\frac{n-1}{2}\right\rfloor+n-1-2Z_1+Z_1^2-\Delta_n\Delta_{Z_1}
		\end{equation*}
		and $2n-1-2Z_1+Z_1^2-\Delta_n\Delta_{Z_1}= 2n-2+(Z_1-1)^2-\Delta_n\Delta_{Z_1} \geq 2n-3$. This shows the claim.\medskip
		
		In the second case, it holds that $Z_1\geq n/2$. Then $z_1=n-Z_1$. Here we distinguish the subcases whether or not $[x_1,x_2]\subseteq[y_1,y_2]$. 
		If $[x_1,x_2]\subseteq[y_1,y_2]$. Using the third inequality of \Cref{lem:mixed}, the number of crossings is at least $6\lfloor\frac{n}{2}\rfloor\left\lfloor\frac{n-1}{2}\right\rfloor+$
		\begin{align*}
			&n+n-1-2Z_1+\left\lfloor\left(Z_1-\frac{n-2}{2}\right)^2\right\rfloor+(n-Z_1)^2-\Delta_n\Delta_{n-Z_1}\\
			&\geq 2n-3+\left\lfloor\nicefrac{1}{8}\big((4Z_1-3n)^2+(n-4)^2\big)\right\rfloor\\
			&\geq 2n-3.
		\end{align*}
		
		If $Z_1\geq n/2$ and $[x_1,x_2]\not\subseteq[y_1,y_2]$, then the second inequality of \Cref{lem:mixed} shows that the number of crossings is at least $6\lfloor\frac{n}{2}\rfloor\left\lfloor\frac{n-1}{2}\right\rfloor+$
		\begin{align*}
			n+n-1-2Z_1+2n+(n-Z_1)^2-\Delta_n\Delta_{n-Z_1}&\\
			\geq & 2n-1+2(n-Z_1)+(n-Z_1)^2-1\\
			=& 2n-3+(n-Z_1+1)^2\\
			\geq & 2n-3.
		\end{align*}
		This finishes the proof for drawings of type 4.
	\end{proof}
	
	\begin{lemma}\label{lem:type1}
		If $D$ is a simple tripartite-circle drawing of $K_{2,2,n}$ of type $1$ with $n\geq3$, then the number of crossings in $D$ is at least $6\lfloor\frac{n}{2}\rfloor\left\lfloor\frac{n-1}{2}\right\rfloor+2n-3$.
	\end{lemma}
	
	\begin{proof}By \Cref{lem:redgreencrossings}, drawings of type 1 have at least the following number of crossings
		\begin{align*}
			&2(d_n(y_1,x_1)+d_n(x_2,y_2)+d_n(y_3,x_3)+d_n(x_4,y_4))+f_n(x_1,x_2)+f_n(x_3,x_4)\\
			&+ f_n(y_1,y_3)+f_n(y_1,y_4)+f_n(y_2,y_3)+f_n(y_2,y_4)+1.
		\end{align*}
		For $i=1,3$,  recall $Z_i=d_n(y_{i+1},y_i)$, $z_i=\min\{Z_i,n-Z_i\}$, and let $$B_i=\begin{cases}
			0& \text{if } Z_i\leq (n-1)/2,\\
			2n& \text{if } Z_i\geq n/2\text{ and ccw order } \neq x_{i+1}y_{i+1}y_ix_i,\\
			\left\lfloor\left(Z_i-\frac{n-2}{2}\right)^2\right\rfloor& \text{if } Z_i\geq n/2\text{ and ccw order } = x_{i+1}y_{i+1}y_ix_i.
		\end{cases}$$ 
		\smallskip
		
		\begin{case} First, we consider the case where \Cref{lem:ysNEW}i) applies. 
			By \Cref{lem:mixed}, \Cref{lem:ysNEW}i), and \Cref{eq:k22n3}, the number of crossings is at least
			\begin{equation*}
				6\bigg\lfloor\frac{n}{2}\bigg\rfloor\left\lfloor\frac{n-1}{2}\right\rfloor+2n-3+(1-2Z_1+z_1^2+B_1)+(1-2Z_3+z_3^2+B_3)-\Delta_n\Delta_{z_1+z_3}.
			\end{equation*}
			We need to prove that $(1-2Z_1+z_1^2+B_1)+(1-2Z_3+z_3^2+B_3)-\Delta_n\Delta_{z_1+z_3}\geq 0$, which follows from the following three inequalities. 
			\begin{itemize}
				\item If $Z_i\leq (n-1)/2$, then $z_i=Z_i$ and $B_i=0$. Thus $$1-2Z_i+z_i^2+B_i=1-2Z_i+Z_i^2=(Z_i-1)^2\geq1$$
				unless $z_i=Z_i=1$.
				\item If $Z_i\geq n/2$ and ccw order  $\neq x_{i+1}y_{i+1}y_ix_i$, then $z_i=n-Z_i$ and $B_i=2n$. Thus
				$$1-2Z_i+z_i^2+B_i=1-2Z_i+(n-Z_i)^2+2n=(n-Z_i+1)^2\geq4$$
				because  $Z_i\leq n-1$.
				\item If $Z_i\geq n/2$ and ccw order  $= x_{i+1}y_{i+1}y_ix_i$, then $z_i=n-Z_i$ and $B_i=	\left\lfloor\left(Z_i-\frac{n-2}{2}\right)^2\right\rfloor$. Thus
				$$1-2Z_i+z_i^2+B_i=1-2Z_i+(n-Z_i)^2+	\left\lfloor\left(Z_i-\frac{n-2}{2}\right)^2\right\rfloor=\left\lfloor\frac{(4Z_3-3n)^2+(n-4)^2}{8}\right\rfloor\geq 1$$
				unless $n=3,4$, or $5$ and $z_i=n-Z_i=1$.
			\end{itemize}
			Note that $1-2Z_1+z_1^2+B_1=1-2Z_3+z_3^2+B_3=0$ if and only if $z_1=z_3=1$ and thus $(1-2Z_1+z_1^2+B_1)+(1-2Z_3+z_3^2+B_3)-\Delta_n\Delta_{z_1+z_3}=0$. In all other cases, $1-2Z_1+z_1^2+B_1$ and $1-2Z_3+z_3^2+B_3$ are both $\geq 0$ and at least one of them is $\geq 1$ so that  $(1-2Z_1+z_1^2+B_1)+(1-2Z_3+z_3^2+B_3)-\Delta_n\Delta_{z_1+z_3}\geq 0$.
		\end{case}
		\smallskip
		
		\begin{case}\label{case:B}
			Now, we consider the case where \Cref{lem:ysNEW}ii) applies; note that this implies $n\geq 4$. Also, $z_i^2+(n-z_i)^2=Z_i^2+(n-Z_i)^2$ for $i=1,3$. By \Cref{lem:mixed}, \Cref{lem:ysNEW}ii), and \Cref{eq:k22n3}, the number of crossings is at least
			\begin{align*}
				6\bigg\lfloor\frac{n}{2}\bigg\rfloor\left\lfloor\frac{n-1}{2}\right\rfloor&+2n-3+\left(1-2Z_1+B_1+\nicefrac{1}{4}(Z_1^2+(n-Z_1)^2-\Delta_n)\right)\\
				&+\left(1-2Z_3+B_3+\nicefrac{1}{4}(Z_3^2+(n-Z_3)^2-\Delta_n)\right).
			\end{align*}
			It is enough to prove that $1-2Z_i+B_i+\nicefrac{1}{4}(Z_i^2+(n-Z_i)^2-\Delta_n)\geq0
			$ for $i=1,3$ in each of the following three cases:
			\begin{itemize}
				\item If $Z_i\leq (n-1)/2$. Then $B_i=0$ and 
				\begin{align*}
					1-2Z_i+B_i&+\nicefrac{1}{4}(Z_i^2 +(n-Z_i)^2-\Delta_n)\\
					&=\nicefrac{1}{4}((Z_i-2)^2+(n-Z_i+2)^2-\Delta_n)-1-n\\
					&\geq \frac{1}{4}\left(\left(\frac{n-5}{2}\right)^2
					+\left(\frac{n+5}{2}\right)^2-\Delta_n\right)-1-n\\
					&=\frac{n^2+25-2\Delta_n}{8}-1-n
					=\frac{(n-4)^2+1-2\Delta_n}{8}\geq0.
				\end{align*}
				\item  If $Z_i\geq n/2$ and $B_i=2n$. Then
				\begin{align*}
					1-2Z_i+B_i&+\nicefrac{1}{4}(Z_i^2+(n-Z_i)^2-\Delta_n)\\
					&	=\nicefrac{1}{4}((Z_i-2)^2+(n-Z_i+2)^2-\Delta_n)-1+n\\
					&\geq \frac{1}{4}\left(\left\lfloor\frac{n}{2}\right\rfloor^2
					+\left\lceil\frac{n}{2}\right\rceil^2-\Delta_n\right)-1+n=\frac{n^2-\Delta_n}{8}-1+n\\
					&\geq\frac{n^2-\Delta_n}{8}+2-n=\frac{(n-4)^2-\Delta_n}{8}\geq0.
				\end{align*}
				\item  If $Z_i\geq n/2$ and $B_i=	\left\lfloor\left(Z_i-\frac{n-2}{2}\right)^2\right\rfloor$. Then
				\begin{align*}
					1-&2Z_i+B_i+\nicefrac{1}{4}(Z_i^2+(n-Z_i)^2-\Delta_n)\\
					&=1-2Z_i+\left\lfloor\left(Z_i-\frac{n-2}{2}\right)^2\right\rfloor+\nicefrac{1}{4}(Z_i^2+(n-Z_i)^2-\Delta_n)\\
					&\geq \frac{1}{4}\left(\left\lfloor\frac{n}{2}\right\rfloor^2
					+\left\lceil\frac{n}{2}\right\rceil^2-\Delta_n\right)+2-n=\frac{n^2-\Delta_n}{8}+2-n=\frac{(n-4)^2-\Delta_n}{8}\geq0.
				\end{align*}
			\end{itemize}
		\end{case}
		This finishes the proof of the lemma.
	\end{proof}
	
	Together, \Cref{le:k22n,lem:type1,lem:type234} yield the lower bound.
	
	\LowerBound*
	
	\begin{proof}
		Let $D$ be a tripartite-circle drawing of $K_{2,2,n}$ with the minimum number of crossings among all such drawings. By crossing-minimality, $D$ is simple. By \Cref{le:k22n}, either $D$ is of type $1$, and \Cref{lem:type1} applies, or $D$ is of type $2$, $3$, or $4$, and \Cref{lem:type234} applies. In all cases, $D$ has at least $6\lfloor\frac{n}{2}\rfloor\left\lfloor\frac{n-1}{2}\right\rfloor+2n-3$ crossings.
	\end{proof}
	
	
	\section{Conclusion}\label{sec:open}
	In this paper, we determined the exact value of the tripartite-circle crossing number of  $K_{m,n,p}$ for all $m,n\leq 2$. The natural next questions in this direction include determining $\crN{3}(K_{m,n,p})$ for other small values of $m$ and $n$.
	Of particular interest are the values $(m,n) = (1,3)$, $(1,4)$, $(1,5)$, $(2,3)$ because in these cases the exact value of $\crg(K_{m,n,p})$ is known but the exact value of $\crN{3}(K_{m,n,p})$ is not. 
	
	Another natural extension of this work is to study $k$-partite-circle drawings for $k > 3$. In particular, what is the  $k$-partite-circle crossing number of $K_{2,\ldots,2,n}$ for $k > 3$?
	
	One implication of our result is that $\crg(K_{2,4,n})- \crN{3}(K_{2,2,n}) = 3$. It is an open problem whether there is a connection between drawings of $K_{2,4,n}$ and tripartite-circle drawings of $K_{2,2,n}$ that implies this similarity in the different crossing numbers.

	\subsection*{Acknowledgements}
	
	This material is based upon work supported by the National Science Foundation under Grant Number DMS 1641020.
	Silvia Fern\'andez-Merchant was supported by the NSF grant DMS 1400653.  Marija Jeli\'c Milutinovi\'c has been supported by the Project No.\ 7744592 MEGIC ``Integrability and Extremal Problems in Mechanics, Geometry and Combinatorics'' of the Science Fund of Serbia, and by the Faculty of Mathematics University of Belgrade through the grant (No.\ 451-03-47/2023-01/200104) by the Ministry of Education, Science, and Technological Development of the Republic of Serbia. Rachel Kirsch was partially supported by NSF grant DMS 1839918.
	
	\providecommand{\arxiv}[1]{\href{http://www.arxiv.org/abs/#1}{arXiv~#1}}
	\providecommand{\doi}[1]{\url{https://doi.org/#1}}
	\providecommand{\href}[2]{#2}

\end{document}